\def\N{\mathbb{N}}
\def\F{\mathbb{F}}
\newtheorem{theorem}{Theorem}[section]%[chapter]
\newtheorem{proposition}[theorem]{Proposition}
\newtheorem{lemma}[theorem]{Lemma}
\newtheorem{definition}[theorem]{Definition}
\newtheorem{remark}[theorem]{Remark}
\newtheorem{notation}[theorem]{Notation}
\newtheorem{exemple}[theorem]{Example}
\begin{document}
\title{On splitting bi-unitary perfect polynomials over~$\F_{p^2}$}
 \author{Luis H. Gallardo, Olivier Rahavandrainy\\
Univ Brest, UMR CNRS 6205\\
Laboratoire de Math\'ematiques de Bretagne Atlantique}
\maketitle
Mathematics Subject Classification (2010): 11T55, 11T06.\\
\\
{\bf{Abstract}}
We give all splitting bi-unitary perfect polynomials
over the field $\F_4$ and some splitting ones over $\F_{p^2}$, if $p$ is an odd prime.

{\section{Introduction}}
Let $p$ be a prime number and let $\F_q$ be a finite field of
characteristic $p$ with $q$ elements.
Let $S \in \F_q[x]$ be a nonzero polynomial.
A divisor $D$ of $S$ is called unitary if $\gcd(D,S/D)=1$. We designate by $\gcd_u(S,T)$ the greatest
common unitary divisor of $S$ and $T$.
A divisor $D$ of $S$ is called bi-unitary if $\gcd_u(D,S/D)=1$.
We denote by $\sigma(S)$ (resp. $\sigma^*(S)$, $\sigma^{**}(S)$) the sum of all divisors
(resp. unitary divisors, bi-unitary divisors)
of $S$.
The functions $\sigma$, $\sigma^*$ and $\sigma^{**}$ are all multiplicative.
We say that $S$ is \emph{perfect} (resp. \emph{unitary perfect},
\emph{bi-unitary perfect}) if $\sigma(S) = S$
(resp. $\sigma^*(S)=S$, $\sigma^{**}(S)=S$).
As usual, $\omega(S)$ designates the number of distinct irreducible
factors of $S$.

We denote by $\N$ the set of non-negative
integers and by $\N\sp{*}$ the set of positive integers.

Several studies are done about perfect, unitary and bi-unitary perfect polynomials (see \cite{BeardU}, \cite{Beard-bup}, \cite{Canaday}, \cite{Gall-Rahav-F4},
\cite{Gall-Rahav9},  \cite{Gall-Rahav6}, \cite{Gall-Rahav2} and references therein).

In this paper, we are interested in splitting polynomials over $\F_{p^2}$ which are bi-unitary perfect (b.u.p.).  We treat the case where $p=2$ in Section \ref{casF4} and the general case in Section \ref{casFpdeux}.

\section{Preliminaries}\label{preliminairebup}
Some of  the following results are obvious or (well) known, so we omit
their proofs. See also \cite{Gall-Rahav-bup-Mersenne}.

\begin{lemma} \label{gcdunitary}
Let $T$ be an irreducible polynomial over $\F_{p^2}$ and $k,l\in \N^*$.
Then,  $\gcd_u(T^k,T^l) = 1 \ ($resp. $T^k)$ if $k \not= l \ ($resp. $k=l)$.\\
In particular, $\gcd_u(T^k,T^{2n-k}) = 1$ for $k \not=n$,  $\gcd_u(T^k,T^{2n+1-k}) = 1$ for
any $0\leq k \leq 2n+1$.
\end{lemma}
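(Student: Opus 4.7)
The plan is to characterize all unitary divisors of a prime power $T^k$ explicitly, and then intersect. Since $T$ is irreducible, every divisor of $T^k$ has the form $T^a$ with $0\le a\le k$. Such a $T^a$ is a unitary divisor of $T^k$ precisely when $\gcd(T^a,T^{k-a})=T^{\min(a,k-a)}=1$, i.e.\ when $\min(a,k-a)=0$. This forces $a=0$ or $a=k$, so the set of unitary divisors of $T^k$ is exactly $\{1,T^k\}$.

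From here the main statement is immediate. A common unitary divisor of $T^k$ and $T^l$ belongs to $\{1,T^k\}\cap\{1,T^l\}$. If $k\ne l$ this intersection equals $\{1\}$, so $\gcd_u(T^k,T^l)=1$; if $k=l$ the intersection is $\{1,T^k\}$ and the greatest element is $T^k$.

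The two particular cases follow by specialisation. Taking $l=2n-k$, equality $k=l$ holds iff $k=n$, hence $\gcd_u(T^k,T^{2n-k})=1$ exactly when $k\ne n$. Taking $l=2n+1-k$, equality $k=l$ would require $2k=2n+1$, which is impossible (parity), so $\gcd_u(T^k,T^{2n+1-k})=1$ for every admissible $k$.

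There is no real obstacle in this proof; the only point one must be careful about is reading the definition of unitary correctly — a unitary divisor $D$ of $S$ is defined relative to $S$ via $\gcd(D,S/D)=1$, so one has to enumerate the unitary divisors of each $T^k$ separately before taking the intersection, rather than looking for a single notion of ``unitary element'' independent of the ambient polynomial.
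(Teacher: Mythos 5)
Your proof is correct: identifying the unitary divisors of $T^k$ as exactly $\{1,T^k\}$ (for $k\ge 1$) and intersecting is the standard argument, and the two particular cases follow as you say. The paper omits the proof of this lemma entirely (it is listed among the results whose proofs are "obvious or well known"), so there is no authorial argument to compare against; yours is the natural one.
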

\begin{lemma} \label{aboutsigmastar2}
Let $T \in \F_{p^2}[x]$ be irreducible. Then\\
i) $\sigma^{**}(T^{2n}) = (1+T^{n+1}) \sigma(T^{n-1}), \
\sigma^{**}(T^{2n+1}) = \sigma(T^{2n+1})$.\\
ii) For any $c \in \N$, $T$ does not divide $\sigma^{**}(T^{c})$.
\end{lemma}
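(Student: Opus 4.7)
The plan is to use the fact that when $T$ is irreducible, the divisors of $T^c$ are exactly the powers $T^0, T^1, \ldots, T^c$, so Lemma \ref{gcdunitary} immediately tells us which ones are bi-unitary: $T^k$ is a bi-unitary divisor of $T^c$ iff $\gcd_u(T^k, T^{c-k}) = 1$, iff $k \neq c-k$.

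For part (i), I would split into the two parities. When $c = 2n+1$ is odd, the condition $k \neq 2n+1-k$ holds for every $k$, so every divisor of $T^{2n+1}$ is bi-unitary and
\[
\sigma^{**}(T^{2n+1}) \;=\; \sum_{k=0}^{2n+1} T^k \;=\; \sigma(T^{2n+1}).
\]
When $c = 2n$ is even, the only forbidden value is $k = n$, giving
\[
\sigma^{**}(T^{2n}) \;=\; \sigma(T^{2n}) - T^n \;=\; \sum_{k=0}^{n-1} T^k \;+\; \sum_{k=n+1}^{2n} T^k.
\]
Factoring $T^{n+1}$ from the second sum and recognizing the first as $\sigma(T^{n-1})$ yields $(1+T^{n+1})\sigma(T^{n-1})$, as desired. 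I would note that $n=0$ is a harmless degenerate case: $\sigma^{**}(1) = 1$.

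For part (ii), the key observation is that $T^0 = 1$ is always a bi-unitary divisor of $T^c$ (trivially for $c=0$, and by the characterization above whenever $c \geq 1$, since $0 \neq c$). Hence the sum $\sigma^{**}(T^c)$ contains the term $1$, and all other terms are positive powers of $T$. Writing $\sigma^{**}(T^c) = 1 + T\cdot P$ for some $P \in \F_{p^2}[x]$, irreducibility of $T$ prevents $T \mid 1$, so $T \nmid \sigma^{**}(T^c)$.

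I foresee no substantive obstacle here: the entire argument is a direct translation of Lemma \ref{gcdunitary} into sums of powers, together with the elementary algebraic rearrangement in the even case. The only minor point worth flagging explicitly is the boundary case $n=0$ in the stated formula, where the right-hand side should be interpreted so as to reproduce $\sigma^{**}(1) = 1$.
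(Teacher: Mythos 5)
Your proof is correct and is exactly the standard argument: the paper itself omits the proof of this lemma (declaring it ``obvious or well known''), but Lemma \ref{gcdunitary} is stated immediately before it precisely to supply the characterization you use, namely that $T^k$ is a bi-unitary divisor of $T^c$ iff $k\neq c-k$. Your handling of the endpoints $k=0,c$ and the degenerate case $n=0$ is a sensible extra precaution; nothing further is needed.
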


\begin{lemma} \label{multiplicativity}
If $A = A_1A_2$ is b.u.p. over $\F_{p^2}$ and if
$\gcd(A_1,A_2) =~1$, then $A_1$ is b.u.p. if and only if
$A_2$ is b.u.p.
\end{lemma}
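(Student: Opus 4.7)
The plan is to leverage the multiplicativity of $\sigma^{**}$ stated in the Introduction together with the fact that $\F_{p^2}[x]$ is an integral domain. Since $\gcd(A_1,A_2)=1$, multiplicativity gives the identity $\sigma^{**}(A)=\sigma^{**}(A_1A_2)=\sigma^{**}(A_1)\sigma^{**}(A_2)$.

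Combining this with the hypothesis $\sigma^{**}(A)=A=A_1A_2$, I obtain the single equality
\[
\sigma^{**}(A_1)\,\sigma^{**}(A_2) \;=\; A_1 A_2.
\]
From this equation the biconditional is immediate. If $A_1$ is b.u.p., i.e.\ $\sigma^{**}(A_1)=A_1$, then substituting and cancelling the nonzero factor $A_1$ in the integral domain $\F_{p^2}[x]$ yields $\sigma^{**}(A_2)=A_2$, so $A_2$ is b.u.p. The reverse implication follows by the symmetric argument (swap the roles of $A_1$ and $A_2$ and cancel $A_2$ instead).

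There is essentially no obstacle here: the proof is a one-line consequence of multiplicativity plus cancellation. The only point to verify is that $A_1$ and $A_2$ are both nonzero, which is implicit in the assumption that $A=A_1A_2$ is b.u.p. (so $\sigma^{**}(A)=A\neq 0$). Hence the cancellation step is legitimate and the lemma follows.
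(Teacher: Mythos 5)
Your proof is correct and is precisely the argument the paper has in mind: the lemma is listed among the results whose proofs are omitted as ``obvious or (well) known,'' and the intended justification is exactly the multiplicativity of $\sigma^{**}$ on coprime factors followed by cancellation in the integral domain $\F_{p^2}[x]$. Your added remark that $A_1,A_2$ are nonzero (so cancellation is legitimate) is a sensible touch and nothing is missing.
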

\begin{lemma} \label{translation}
If $A$ is b.u.p. over $\F_{p^2}$, then the polynomial $A(x+t)$
is also b.u.p. over~$\F_{p^2}$, for any $t \in \F_{p^2}$.
\end{lemma}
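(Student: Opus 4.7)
The plan is to exploit the fact that substitution $x \mapsto x+t$ is an $\F_{p^2}$-algebra automorphism of $\F_{p^2}[x]$. Write $\phi_t : \F_{p^2}[x] \to \F_{p^2}[x]$ for the map $\phi_t(f)(x) = f(x+t)$. Its inverse is $\phi_{-t}$, so $\phi_t$ is bijective, degree preserving, and compatible with products and gcd's. In particular, $\phi_t$ sends irreducible polynomials to irreducible polynomials and induces a bijection on the set of (monic) divisors of any given polynomial.

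The next step is to check that $\phi_t$ intertwines the bi-unitary divisor sum with itself. If $A = \prod_i T_i^{a_i}$ is the factorization of $A$ into pairwise distinct monic irreducible factors, then $\phi_t(A) = \prod_i \phi_t(T_i)^{a_i}$ is the analogous factorization of $\phi_t(A)$. For any divisor $D$ of $A$, we have $\gcd_u(D, A/D) = 1$ if and only if $\gcd_u(\phi_t(D), \phi_t(A)/\phi_t(D)) = 1$, because $\phi_t$ preserves gcd's and hence unitary gcd's. Therefore the map $D \mapsto \phi_t(D)$ is a bijection between the bi-unitary divisors of $A$ and those of $\phi_t(A)$. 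Summing and using the $\F_{p^2}$-linearity of $\phi_t$ gives
\[
\sigma^{**}(\phi_t(A)) = \phi_t(\sigma^{**}(A)).
\]

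Now the claim is immediate: if $A$ is b.u.p., i.e. $\sigma^{**}(A) = A$, then $\sigma^{**}(A(x+t)) = \sigma^{**}(\phi_t(A)) = \phi_t(\sigma^{**}(A)) = \phi_t(A) = A(x+t)$, so $A(x+t)$ is b.u.p. as well.

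There is no real obstacle; the only point that needs verification is the interaction of $\phi_t$ with the unitary gcd, and this is handled by the remark that $\phi_t$ is a ring automorphism and therefore preserves every divisibility relation that enters the definition of $\sigma^{**}$.
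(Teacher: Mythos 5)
Your argument is correct: the paper omits the proof of this lemma as "obvious or (well) known," and the automorphism argument you give (translation is an $\F_{p^2}$-algebra automorphism, hence preserves irreducible factorizations, unitary gcd's, and therefore commutes with $\sigma^{**}$) is exactly the standard justification the authors have in mind.
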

\subsection{Case $\F_4$} \label{prelim-casF4}
\begin{lemma} \label{splitcritere-F4}
i) $\sigma^{**}(x^{2k+1})$ splits over $\F_4$ if and only if \ $2k+1 =N\cdot 2^n-1$ where $N \in \{1,3\}$ and $n \in \N$.\\
ii) $\sigma^{**}(x^{2k})$ splits over $\F_4$ if and only if \ $2k \in \{2,4,6\}$.
\end{lemma}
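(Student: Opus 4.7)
The plan is to reduce both parts to the standard criterion for when a polynomial $x^m - 1$ splits over $\F_4$. In characteristic $2$, writing $m = 2^n N$ with $N$ odd, one has $x^m - 1 = (x^N - 1)^{2^n}$; since $\F_4^*$ is cyclic of order $3$, this splits if and only if $N \mid 3$, i.e.\ $N \in \{1,3\}$.

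For (i), Lemma~\ref{aboutsigmastar2}(i) gives $\sigma^{**}(x^{2k+1}) = \sigma(x^{2k+1}) = (x^{2k+2}-1)/(x-1)$, which splits over $\F_4$ if and only if $x^{2k+2}-1$ does. Applying the criterion with $m = 2k+2$ immediately yields $2k+2 = N\cdot 2^n$ with $N \in \{1,3\}$, equivalently $2k+1 = N\cdot 2^n - 1$, as claimed.

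For (ii), I would combine Lemma~\ref{aboutsigmastar2}(i) with the identity $1+x^{k+1}=x^{k+1}-1$ valid in characteristic $2$ to rewrite
\[
\sigma^{**}(x^{2k}) \;=\; (x^{k+1}-1)\cdot\frac{x^{k}-1}{x-1}.
\]
This product splits over $\F_4$ iff each of the factors $x^{k+1}-1$ and $x^{k}-1$ splits, which by the criterion above is equivalent to the odd parts of \emph{both} $k+1$ and $k$ lying in $\{1,3\}$.

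The main (mild) obstacle is then the finishing case analysis on consecutive integers. Since $\gcd(k,k+1)=1$, exactly one of the two is odd. If $k$ is odd, the condition forces $k\in\{1,3\}$, and the corresponding even $k+1\in\{2,4\}$ has odd part $1$; this gives two solutions $k=1,3$. If $k$ is even, then $k+1$ is odd and must lie in $\{1,3\}$, so—discarding the trivial value $k=0$—we are left with $k=2$, whose odd part is $1$, giving a third solution. Collecting the three surviving cases yields $k\in\{1,2,3\}$, i.e.\ $2k\in\{2,4,6\}$.
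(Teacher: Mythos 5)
Your proof is correct and follows essentially the same route as the paper's: both parts reduce, via the decomposition $\sigma^{**}(x^{2k})=(1+x^{k+1})\sigma(x^{k-1})$, to the criterion that $x^m-1$ splits over $\F_4$ exactly when the odd part of $m$ lies in $\{1,3\}$. Your consecutive-integers observation in part (ii) is merely a slightly tidier packaging of the paper's enumeration of $k=N\cdot 2^n$ with $2k+2=M\cdot 2^m$, and yields the same set $k\in\{1,2,3\}$.
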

\begin{proof}
i) holds since $\displaystyle{\sigma^{**}(x^{2k+1}) = \sigma(x^{2k+1}) = \frac{x^{2k+2}-1}{x-1}}$ must split over $\F_4$.\\
ii): One has $\sigma^{**}(x^{2k}) = (1+x^{k+1}) \cdot \sigma(x^{k-1})$, $\sigma(x^{k-1})$
splits if and only if $k = N \cdot 2^n$ where $N \in \{1,3\}$ and $n \in \N$. Now, $1+x^{k+1}$ splits if and only if $2 N \cdot 2^n + 2 = 2k+2 = M\cdot 2^m$, where $M \in \{1,3\}$ and $m \in \N$.\\
Therefore, $2^{n+1} + 2$ and $3\cdot 2^{n+1} + 2$ belong to the set $\{2^m, 3\cdot 2^m: m \in \N\}$. So, ($N=1$, $n=0$ and $m=2$) or ($N=1$, $n=1$ and $m=1$) or ($N=3$, $n=0$ and $m=3$). Thus, $k \in \{1,2,3\}$.
\end{proof}
\begin{remark}
{\emph{We get from Lemma \ref{splitcritere-F4} and for $T \in \{x,x+1,x+\alpha,x+\alpha+1\}$:
\begin{equation} \label{relation1-F4}
\left\{\begin{array}{l}
\sigma^{**}(T^2) = (T+1)^2\\
\sigma^{**}(T^4) = (T+1)^2 (T+\alpha)(T+\alpha+1)\\
\sigma^{**}(T^6) = (T+1)^4 (T+\alpha)(T+\alpha+1)\\
\sigma^{**}(T^{2^n-1}) = (T+1)^{2^n-1}\\
\sigma^{**}(T^{3 \cdot 2^n-1}) = (T+1)^{2^n-1}(T+\alpha)^{2^n}(T+\alpha+1)^{2^n}.
\end{array}
\right.
\end{equation}}}
\end{remark}
We sometimes use the above equalities for a suitable $T$. We recall here the list of all splitting perfect polynomials over $\F_4$.
\begin{lemma} [\cite{Gall-Rahav-F4}, Theorem 3.4] \label{perfectresultF4}
The polynomial $x^h(x+1)^k(x+\alpha)^l(x+\alpha+1)^t$ is perfect over $\F_4$ if and only if one of the following conditions is satisfied:
i) $h=k=2^n-1,\ l=t=2^m-1$ for some $n,m \in \N$,\\
ii) $h=k=l=t=N \cdot 2^n-1$ for some $n\in \N$ and $N \in \{1,3\}$,\\
iii) $h=l=3 \cdot 2^r-1, \ k=t=2\cdot 2^r-1$ for some $r \in \N$.
\end{lemma}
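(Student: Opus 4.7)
The plan is to exploit the multiplicativity of $\sigma$: writing $A = x^h(x+1)^k(x+\alpha)^l(x+\alpha+1)^t$, the equation $\sigma(A) = A$ becomes a product of four $\sigma(T^{a_T})$ on the left, and one compares the exponent of each of the four linear primes $T \in \{x,\, x+1,\, x+\alpha,\, x+\alpha+1\}$ on both sides.

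The first step is to identify which exponents are even possible. Since each factor $\sigma(T^{a_T})$ must itself split over $\F_4$, and since $\sigma(T^a) = (T^{a+1} - 1)/(T-1)$ factors in characteristic~$2$ as $(T^N - 1)^{2^s}/(T-1)$ with $a+1 = 2^s N$ and $N$ odd, splitting forces all $N$-th roots of unity to lie in $\F_4^{*}$ (a group of order $3$); hence $N \in \{1, 3\}$. A direct computation in characteristic~$2$ then gives
\[
\sigma(T^{2^n-1}) = (T+1)^{2^n-1}, \qquad \sigma(T^{3 \cdot 2^n-1}) = (T+1)^{2^n-1}(T^2+T+1)^{2^n}.
\]
Using $\alpha^2+\alpha+1=0$, the factor $T^2+T+1$ equals $(x+\alpha)(x+\alpha+1)$ when $T \in \{x, x+1\}$ and equals $x(x+1)$ when $T \in \{x+\alpha, x+\alpha+1\}$, so the multiplicity that $\sigma(T^{a_T})$ contributes to each of the four primes is fully explicit.

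Next, write $h = N_1 \cdot 2^{a_1} - 1,\ k = N_2 \cdot 2^{a_2} - 1,\ l = N_3 \cdot 2^{a_3} - 1,\ t = N_4 \cdot 2^{a_4} - 1$ and match the exponent of each of the four primes on both sides of $\sigma(A) = A$. This produces four equations in $2^{a_1},\ldots,2^{a_4}$, parameterised by the type vector $(N_1,N_2,N_3,N_4) \in \{1,3\}^4$. The translations $x \mapsto x + c$ for $c \in \F_4$, together with the Frobenius $\alpha \mapsto \alpha+1$, act on these sixteen type vectors by permuting the four primes, reducing the analysis to a handful of orbit representatives. Many orbits are ruled out immediately because the exponent matching forces an equation of the shape $2^a = 3 \cdot 2^b$, which has no solutions in $\N$.

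The main obstacle is the systematic resolution of the remaining orbits. The all-$1$ configuration yields case (i): the pairs $\{a_1, a_2\}$ and $\{a_3, a_4\}$ decouple, so $a_1 = a_2 =: n$ and $a_3 = a_4 =: m$ are free. The all-$3$ configuration yields case (ii) with $N = 3$: subtracting the four-equation system pairwise gives $4(2^{a_i} - 2^{a_j}) = 0$, forcing $a_1 = a_2 = a_3 = a_4$. The mixed configuration $(3,1,3,1)$ yields case (iii): the system forces $a_1 = a_3 =: r$ and $a_2 = a_4 = r+1$, whence $h = l = 3\cdot 2^r - 1$ and $k = t = 2 \cdot 2^r - 1$. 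Case (ii) with $N = 1$ is the sub-case $n = m$ of (i). Verifying the converse, that each of the three families genuinely satisfies $\sigma(A) = A$, is a short direct computation using the explicit factorizations recorded above.
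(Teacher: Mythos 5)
This lemma is imported by citation from \cite{Gall-Rahav-F4} and the present paper contains no proof of it, so your argument can only be judged on its own terms. Your framework is the right one and is essentially forced: multiplicativity of $\sigma$, the fact that each $\sigma(T^{a_T})$ must split so that $a_T+1=N\cdot 2^{a}$ with $N\in\{1,3\}$, the explicit factorizations of $\sigma(T^{2^n-1})$ and $\sigma(T^{3\cdot 2^n-1})$, and the exponent-matching system indexed by type vectors in $\{1,3\}^4$. The three type vectors you actually resolve are resolved correctly.

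The gap is the claim that every remaining type vector is ``ruled out immediately'' by an equation of the shape $2^a=3\cdot 2^b$. That fails for the vectors with exactly three entries equal to $3$. For $(N_1,N_2,N_3,N_4)=(3,3,3,1)$ the four matching equations are $2^{a_2}+2^{a_3}=3\cdot 2^{a_1}$, $2^{a_1}+2^{a_3}=3\cdot 2^{a_2}$, $2^{a_4}+2^{a_1}+2^{a_2}=3\cdot 2^{a_3}$ and $2^{a_3}+2^{a_1}+2^{a_2}=2^{a_4}$, which are solved by $a_1=a_2=r$, $a_3=r+1$, $a_4=r+2$. For $r=0$ this is $A=x^2(x+1)^2(x+\alpha)^5(x+\alpha+1)^3$, and indeed $\sigma(x^2)=\sigma((x+1)^2)=(x+\alpha)(x+\alpha+1)$, $\sigma((x+\alpha)^5)=(x+\alpha+1)\,x^2(x+1)^2$, $\sigma((x+\alpha+1)^3)=(x+\alpha)^3$, whose product is $A$: this polynomial is perfect yet satisfies none of (i)--(iii). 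So a complete execution of your own method produces solutions outside the stated list, and your case analysis cannot close without either finding an error in this computation or amending the statement. A second, related defect: reducing ``to a handful of orbit representatives'' under translations and Frobenius is not legitimate when the target statement is the literal ``only if'', because the solution set is invariant under those symmetries while the list (i)--(iii) is not. For instance the Frobenius conjugate of your $(3,1,3,1)$ solution, namely $h=t=3\cdot 2^r-1$, $k=l=2\cdot 2^r-1$ (for $r=0$, the perfect polynomial $x^2(x+1)(x+\alpha)(x+\alpha+1)^2$), matches none of (i)--(iii). To make the argument sound you must either enlarge the conclusion by the full symmetry orbits together with the three-threes families above, or prove the classification only up to translation and conjugation --- and even then the $(3,3,3,1)$ family remains, since it is not conjugate to anything in the list.
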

\subsection{General case} \label{prelim-casgene}
Here, $p$ is an odd prime number. We set $\Omega = \Omega_1 \cup \Omega_2 \cup \Omega_3 \cup \Omega_4$ where $$\begin{array}{l}
\Omega_1:=\{N: \text{ $N$ and $2N+2$ both divide $p^2-1$}\}\\
\Omega_2:=\{p N: \text{$N$ and $2p N+2$ both divide $p^2-1$}\}\\
\Omega_3:=\{N: N | (p^2-1),\ 2N+2 = M\cdot p, \ M | (p^2-1)\}\\
\Omega_4:=\{N: N | (p^2-1),\ 2N+2 = M\cdot p^2, \ M | (p^2-1)\}.
\end{array}$$
\begin{lemma} \label{splitcritere-gene}
i) $\sigma^{**}(x^{2k+1})$ splits over $\F_{p^2}$ if and only if \ $2k+1 =N\cdot p^n-1$ where $N$
divides $p^2-1$ and $n \in \N$.\\
ii) $\sigma^{**}(x^{2k})$ splits over $\F_{p^2}$ if and only if \ $k \in \Omega$.
\end{lemma}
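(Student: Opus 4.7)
The plan is to apply Lemma \ref{aboutsigmastar2}(i) to rewrite $\sigma^{**}$ in a convenient product form, and then analyze when the resulting binomials of the form $x^m - 1$ and $x^m + 1$ split over $\F_{p^2}$.

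For part (i), I would start from $\sigma^{**}(x^{2k+1}) = \sigma(x^{2k+1}) = (x^{2k+2}-1)/(x-1)$, so splitting is equivalent to $x^{2k+2}-1$ splitting over $\F_{p^2}$. Writing $2k+2 = N p^n$ with $\gcd(N,p)=1$, Frobenius gives $x^{2k+2}-1 = (x^N-1)^{p^n}$, reducing the question to the splitting of $x^N-1$. Since $\gcd(N,p)=1$, the $N$-th roots of unity form a cyclic group of order $N$ in $\overline{\F_p}^\times$, and they all lie in $\F_{p^2}^\times$ iff $N \mid p^2-1$.

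For part (ii), using $\sigma^{**}(x^{2k}) = (1+x^{k+1})\,(x^k-1)/(x-1)$, both $x^k-1$ and $x^{k+1}+1$ must split. Writing $k = k'p^a$ and $k+1 = k''p^b$ with $p \nmid k', k''$, the same Frobenius trick gives $x^k-1 = (x^{k'}-1)^{p^a}$ and, since $p$ is odd, $x^{k+1}+1 = (x^{k''}+1)^{p^b}$. Thus $x^k-1$ splits iff $k' \mid p^2-1$, while $x^{k''}+1$ splits iff every root lies in $\F_{p^2}$; its roots are exactly the elements of order dividing $2k''$ but not $k''$, so this holds iff $2k'' \mid p^2-1$.

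It remains to translate these two conditions into membership in $\Omega$ by a case split on $(a,b)$. Since $\gcd(k,k+1)=1$ we have $\min(a,b)=0$. The case $a=b=0$ gives $\Omega_1$ directly. When $a\ge 1,\ b=0$, the condition $2(k+1) \mid p^2-1$ forces $k \le (p^2-3)/2$, whence $k'p^a < p^2/2$ rules out $a\ge 2$, so $a=1$ and we recover $\Omega_2$ with $N = k'$. When $a=0,\ b\ge 1$, the condition $k \mid p^2-1$ gives $k+1 \le p^2$, hence $k''p^b \le p^2$ forces $b \le 2$, producing $\Omega_3$ (for $b=1$, writing $M=2k''$) and $\Omega_4$ (for $b=2$).

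The main obstacle I anticipate is the bookkeeping in part (ii): one must carefully combine the two divisibility conditions $k'\mid p^2-1$ and $2k''\mid p^2-1$ with the size bounds $k,\ k+1 \le p^2-1$ in order to pin down the admissible $p$-adic valuations $a,b$. Once those bounds are extracted, matching each of the four scenarios to $\Omega_1,\Omega_2,\Omega_3,\Omega_4$ is essentially a relabelling.
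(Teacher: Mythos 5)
Your proposal is correct and follows essentially the same route as the paper: the same decomposition $\sigma^{**}(x^{2k})=(1+x^{k+1})\sigma(x^{k-1})$, reduction to the splitting of $x^{k}-1$ and $x^{k+1}+1$ via Frobenius, and a case split on the $p$-adic valuations of $k$ and $k+1$ that matches the paper's case analysis on the exponents $n$ and $m$. The only difference is cosmetic bookkeeping (your $k',k'',a,b$ versus the paper's $N,M,n,m$), plus a bit more explicit justification of when the binomials split.
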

\begin{proof} i) holds since $\displaystyle{\sigma^{**}(x^{2k+1}) = \sigma(x^{2k+1}) = \frac{x^{2k+2}-1}{x-1}}$ must split over $\F_{p^2}$.\\
ii): One has $\sigma^{**}(x^{2k}) = (1+x^{k+1}) \cdot \sigma(x^{k-1})$, $\sigma(x^{k-1})$
splits if and only if $k = N p^n$ where $N$ divides $p^2-1$ and $n \in \N$. Now, $1+x^{k+1}$ splits if and only if $2Np^n + 2 = 2k+2 = Mp^m$, where $M$ divides $p^2-1$ and $m \in \N$.\\
- If $n,m \geq 1$, then $p$ divides $2$, which is impossible.\\
- If $n \geq 1$ and $m=0$, then $2Np^n + 2 = M < p^2$ and thus $n=1$ and $k = N \in \Omega_2$.\\
- If $n = 0$, then  $p^3 > 2p^2+2 > 2N + 2 = Mp^m$ so that $m \in \{0,1,2\}$ and $k \in \Omega_1 \cup \Omega_3 \cup \Omega_4$.
\end{proof}
\begin{lemma} \label{lesOmega}
One has $\Omega_2 = \{p\}$, $\Omega_3 = \{p-1\}$ and $\Omega_4 = \{p^2-1\}$.
\end{lemma}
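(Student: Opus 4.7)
The plan is to treat the three sets separately, since each is defined by an independent divisibility condition.

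For $\Omega_2$, the conditions are $N\mid p^2-1$ and $2pN+2\mid p^2-1$. Since $p$ is odd, $p^2-1$ is even, so $2(pN+1)\mid p^2-1$ gives in particular $pN+1\mid p^2-1$, with the crucial property $pN+1\equiv 1\pmod p$. The heart of the argument is therefore the sub-claim: \emph{the only positive divisors of $p^2-1$ congruent to $1$ modulo $p$ are $1$ and $p+1$.} I would prove this by contradiction: if $d\mid p^2-1$ with $d\equiv 1\pmod p$, then $e:=(p^2-1)/d$ satisfies $de\equiv -1\pmod p$, hence $e\equiv -1\pmod p$, which forces $e\ge p-1$. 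If moreover $d\ge 2p+1$, then
\[
e\le\frac{p^2-1}{2p+1}<\frac{p}{2}\le p-1
\]
for $p\ge 3$, contradicting $e\ge p-1$. Thus $pN+1\in\{1,p+1\}$, and since $N\ge 1$ we get $N=1$, i.e.\ $pN=p$. One readily checks $2p+2\mid (p-1)(p+1)$ for odd $p$.

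For $\Omega_3$, the relation $2N+2=Mp$ with $p$ odd forces $M$ to be even, so I write $M=2M'$ and $N=M'p-1$. The condition $N\mid p^2-1$ becomes $M'p-1\mid p^2-1$. Using the identity
\[
M'(p^2-1)-p(M'p-1)=p-M',
\]
I obtain $M'p-1\mid p-M'$. A short sign analysis on $p-M'$ rules out the three other possibilities: $M'>p$ forces the impossible bound $M'p-1\le M'-p$; $M'=p$ would require $2p\mid p^2-1$, hence $p\mid 1$; and $M'\ge 2$ with $M'<p$ gives $M'p-1\le p-M'$, i.e.\ $p(M'-1)\le -(M'-1)$, again impossible. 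Only $M'=1$ remains, yielding $N=p-1$.

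For $\Omega_4$, the same parity argument on $2N+2=Mp^2$ gives $M=2M'$ and $N=M'p^2-1\ge p^2-1$. Combined with $0<N\mid p^2-1$, this immediately forces $N=p^2-1$, whence $M'=1$ and $M=2$ (and $2\mid p^2-1$ is automatic).

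The only real obstacle is the sub-claim used for $\Omega_2$; the other two cases collapse to a one-line identity followed by elementary sign analysis.
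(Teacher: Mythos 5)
Your proposal is correct and follows essentially the same route as the paper: for each $\Omega_i$ it reduces to the fact that the relevant quantity is a divisor of $p^2-1$ lying in a fixed residue class modulo $p$, and then uses elementary size bounds. Your write-up is in fact somewhat more explicit than the paper's (e.g.\ you justify the exclusion of $N=p^2-1$ in the $\Omega_3$ case, which the paper passes over by asserting $\lambda\geq 1$), but the underlying argument is the same.
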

\begin{proof}
We clearly see that $p \in \Omega_2,\ p-1 \in \Omega_3$  and $p^2-1 \in \Omega_4$. Now, if $pN \in \Omega_2$, we prove that $N = 1$. Since $pN + 1$ divides $(p-1)(p+1)$, we write $(p-1)(p+1) = (pN+1) \cdot \omega$. Thus, $\omega \equiv -1 \mod p$. Set $w = \lambda p-1$. One has $(p-1)(p+1) = (pN+1)( \lambda p-1)$ with $\lambda, N \geq 1$. It follows that $\lambda = N = 1$ and $p=pN \in \Omega_2$.\\
If $N \in \Omega_3$, then $M$ must be even, $\displaystyle{N+1 =\frac{M}{2} p}$ with $N$ and $M$ divide $p^2-1$. Thus, $\displaystyle{(p-1)(p+1) = N \omega = (\frac{M}{2} p -1) \omega}$ and $\omega \equiv 1 \mod p$. Set $w = \lambda p+1$. One has $\displaystyle{(p-1)(p+1) = (\frac{M}{2} p -1)(\lambda p+1)}$ with $\displaystyle{\lambda, \frac{M}{2} \geq 1}$. It follows that $\displaystyle{\lambda = \frac{M}{2} = 1}$ and $p-1 = N \in \Omega_3$.\\
Finally, if $N \in \Omega_4$, then $2p^2 \geq 2N+2 = Mp^2$, $M \leq 2$ and thus $M=2$, $p^2-1 = N \in \Omega_4$.
\end{proof}
\begin{proposition} [\cite{Beard-bup}] \label{bupoverFp}
Let $\displaystyle{A = (x^p-x)^{r}}$ be b.u.p. over $\F_p$. 
Then, $r$ satisfies one of the following conditions:\\
i) $r = N p^n - 1 \equiv 1 \mod 2$, $N$ divides $p-1$ and $n \in \N$,\\
ii) $r = 2(p-1)$,\\
iii) $r = 2N$, $N$ even and $N(N+1)$ divides $p-1$,\\
iv) $p \equiv 1 \mod 4$,\ $r = 2N$, $N$ odd and $2N(N+1)$ divides $p-1$.
\end{proposition}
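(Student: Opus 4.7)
The strategy is to use the multiplicativity of $\sigma^{**}$ after factoring $A$ completely. Since $x^p - x = \prod_{a \in \F_p}(x - a)$, one has $A = \prod_{a \in \F_p}(x-a)^r$ and hence $\sigma^{**}(A) = \prod_{a \in \F_p}\sigma^{**}((x-a)^r)$. The condition $\sigma^{**}(A) = A$ forces $\sigma^{**}(A)$ to split over $\F_p$, so each $\sigma^{**}((x-a)^r)$ must already split. The plan is to work out this per-factor splitting constraint first (as in Lemmas~\ref{aboutsigmastar2} and \ref{splitcritere-gene}), and then to verify by a direct product computation that it is automatically sufficient.

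For $r = 2k+1$ odd, Lemma~\ref{aboutsigmastar2} gives $\sigma^{**}((x-a)^r) = ((x-a)^{r+1}-1)/((x-a)-1)$. Writing $r+1 = Np^n$ with $\gcd(N,p) = 1$ and using $y^{Np^n}-1 = (y^N-1)^{p^n}$ in characteristic $p$, splitting forces $N \mid p-1$ so that the $N$-th roots of unity lie in $\F_p$. Under this hypothesis,
$$\sigma^{**}((x-a)^r) = (x-a-1)^{p^n-1}\prod_{\substack{\zeta^N = 1 \\ \zeta \ne 1}}(x-a-\zeta)^{p^n}.$$
Multiplying over $a \in \F_p$ and using the translation identity $\prod_{a \in \F_p}(x-a-c) = x^p - x$, valid for every $c \in \F_p$, I would obtain $\sigma^{**}(A) = (x^p - x)^{Np^n-1} = A$. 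Requiring $r = Np^n - 1$ to be odd then forces $N$ to be even (since $p$ is odd), yielding condition~(i).

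For $r = 2k$ even, Lemma~\ref{aboutsigmastar2} gives $\sigma^{**}((x-a)^{2k}) = (1 + (x-a)^{k+1})\sigma((x-a)^{k-1})$. Writing $k = Np^n$ and $k+1 = \tilde{N}p^m$ with $\gcd(N\tilde N, p) = 1$, splitting of $\sigma((x-a)^{k-1})$ requires $N \mid p-1$ as before, while splitting of $1+(x-a)^{k+1}$ requires the $\tilde{N}$-th roots of $-1$ to lie in $\F_p$, which is equivalent to $2\tilde N \mid p-1$. The same product-over-$a$ computation then yields $\sigma^{**}(A) = (x^p - x)^{k-1}(x^p - x)^{k+1} = A$ whenever both splitting conditions hold, so the splitting conditions are again both necessary and sufficient.

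The main obstacle will be the combinatorics of the pair $(n,m)$ under the Diophantine constraint $Np^n + 1 = \tilde N p^m$ together with $N \mid p-1$ and $2\tilde N \mid p-1$. If $n,m \ge 1$, reducing modulo $p$ gives $1 \equiv 0$, impossible. If $n \ge 1$ and $m = 0$, then $\tilde N = Np^n+1 \ge p+1 > (p-1)/2$, contradicting $2\tilde N \mid p-1$. If $n = 0$ and $m \ge 1$, bounding $N+1 = \tilde N p^m$ with $\tilde N \le (p-1)/2$ and $N \le p-1$ forces $\tilde N = 1$, $m = 1$, $N = p-1$, i.e.\ $r = 2(p-1)$, which is condition~(ii). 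The remaining case $n = m = 0$ reduces to $k = N$ with $N \mid p-1$ and $2(N+1) \mid p-1$; since $\gcd(N, N+1) = 1$, these two divisibilities combine into $N(N+1) \mid p-1$ when $N$ is even (condition~(iii)) and into $2N(N+1) \mid p-1$ when $N$ is odd (condition~(iv)), the latter automatically forcing $4 \mid p-1$ since $N+1$ is then even, so $p \equiv 1 \pmod 4$.
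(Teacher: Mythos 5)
Your argument is correct. Note, however, that the paper itself gives no proof of this proposition: it is quoted from Beard \cite{Beard-bup}, so there is nothing internal to compare against. What you have done is essentially to transplant the paper's own machinery for $q=p^2$ down to $\F_p$: your per-root splitting analysis of $\sigma^{**}((x-a)^{2k+1})$ and $\sigma^{**}((x-a)^{2k})$ is the $\F_p$ analogue of Lemma \ref{splitcritere-gene}, your case analysis of the equation $Np^n+1=\tilde N p^m$ is the analogue of Lemma \ref{lesOmega} (with the extra constraint $2\tilde N\mid p-1$ replacing $M\mid p^2-1$, which is what produces the $N(N+1)$ and $2N(N+1)$ divisibilities and the condition $p\equiv 1 \bmod 4$ in place of the sets $\Omega_i$), and your product-over-$a$ verification is the analogue of the tuple-counting in Proposition \ref{perfectcritere}. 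All the individual steps check out: the criterion that $1+y^{\tilde N}$ splits over $\F_p$ iff $2\tilde N\mid p-1$ is right, the four Diophantine subcases are exhaustive and correctly resolved, and the reduction of $N\mid p-1$, $2(N+1)\mid p-1$ to conditions (iii)/(iv) via coprimality of $N$ and $N+1$ is sound. You in fact prove more than the stated proposition (sufficiency as well as necessity), which does no harm.
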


Since the product of coprime (bi-unitary) perfect polynomials is (bi-unitary) perfect,
the following definition is useful for splitting polynomials.
\begin{definition}
{\emph{We say that $A$ is trivially (bi-unitary) perfect over $\F_{p^2}$ if it
is (bi-unitary) perfect and if it may be written as a product: $A =
\displaystyle{A_0 \cdots A_r},$ where $r \geq 1$ and each $A_i$ is (bi-unitary) perfect over $\F_{p^2}$ and $\gcd(A_i,A_j) = 1 \text{ if } i \not= j$.}}
\end{definition}
\begin{definition}
{\emph{We say that $A$ is indecomposable b.u.p. (i.b.u.p.) if it is b.u.p. but not trivially b.u.p.}}.
\end{definition}

\begin{notation} \label{rootsnotation}
{\emph{We fix
an algebraic closure $\overline{\F_p}$ of $\F_p$. We put
$$\begin{array}{l}
\text{$q=p^2$, $\F_{q} = \{i+j \alpha: i, j \in \F_p \} =
\F_p[\alpha]$,}
\text{ where $\alpha \in \overline{\F_p}$ is a root of $x^2 - c$} \\
\text{and $c$ is not a square in $\F_p$}.
\end{array}$$}}
{\emph{We denote by\\
$\zeta_2, \ldots, \zeta_N$ the $N$-th roots of
$1$ distinct from $1$, if $N$ divides $q-1$.\\
$\beta_1, \ldots, \beta_{N+1} \in \F_q$ the $(N+1)$-th roots (counted with multiplicity) of
$-1$ if $2N+2$ divides $q-1$.\\}}
{\emph{We get from Lemmas \ref{splitcritere-gene} and \ref{lesOmega} and for $T=x -\gamma$, $\gamma \in \F_{p^2}$:
\begin{equation} \label{relation1-gene}
\left\{\begin{array}{l}
\sigma^{**}(\displaystyle{T^{N \cdot p^n-1}) = (T-1)^{p^n-1} \prod_{k=2}^N (T-\zeta_k)^{p^n}} \ \
\text{if $N$ is even}\\
\displaystyle{\sigma^{**}(T^{2N}) = \prod_{k=1}^{N+1} (T-\beta_k) \cdot \prod_{k=2}^N (T-\zeta_k)} \text{ if $N \in \Omega_1$}\\
\displaystyle{\sigma^{**}(T^{2p}) = \prod_{k=1}^{p+1} (T-\beta_k) \cdot (T-1)^{p-1}}\\
\displaystyle{\sigma^{**}(T^{2(p-1)}) = (T+1)^{p} \cdot \sigma(T^{p-2}) = (T+1)^{p+1} \cdot \prod_{\ell=2}^{p-2} (T-\ell)}\\
\displaystyle{\sigma^{**}(T^{2(q-1)}) = (T+1)^{q+1} \cdot \prod_{\ell=2}^{q-2} (T-\ell)}.
\end{array}
\right.
\end{equation}}}
\end{notation}

We sometimes use the equalities in (\ref{relation1-gene}) for a suitable $T$. We recall some results about splitting perfect polynomials over $\F_{p^2}$.

\begin{lemma} [\cite{Gall-Rahav6}, Theorem 1.1] \label{perfectresult}
Let $N$ be a divisor of $p^2-1$ and let $\displaystyle{A = \prod_{\gamma \in \F_{p^2}} (x-\gamma)^{Np^{n(\gamma)} -1}}$ be perfect over $\F_{p^2}$, where $p$ is odd. Then,\\
i) $A$ is trivially perfect if $N \mid (p-1)$,\\
ii) There exists $n \in \N$ such that $n(\gamma)=n$ for all $\gamma \in \F_{p^2}$, if $N \nmid (p-1)$.
\end{lemma}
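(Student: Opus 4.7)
The plan is to use the split form of $A$ to convert $\sigma(A)=A$ into a pointwise equation on the exponent function $n:\F_{p^2}\to\N$. Since $N\mid p^2-1$, writing $T=x-\gamma$ I can factor $T^N-1=\prod_{\zeta\in\mu_N}(T-\zeta)$ over $\F_{p^2}$, where $\mu_N$ denotes the group of $N$-th roots of unity. In characteristic $p$, $T^{Np^n}-1=(T^N-1)^{p^n}$, so
\[
\sigma\bigl(T^{Np^n-1}\bigr) = \frac{(T^N-1)^{p^n}}{T-1} = (T-1)^{p^n-1}\prod_{\zeta\in\mu_N\setminus\{1\}}(T-\zeta)^{p^n}.
\]
Substituting $T=x-\gamma$, multiplying over $\gamma\in\F_{p^2}$, and comparing the exponent of each $(x-\delta)$ on the two sides of $\sigma(A)=A$ gives the key identity
\[
\sum_{\zeta\in\mu_N} p^{n(\delta-\zeta)} \;=\; N\,p^{n(\delta)} \qquad (\delta\in\F_{p^2}), \qquad(\star)
\]
which will be analysed separately in the two cases.

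For (ii), set $M=\max_\delta n(\delta)$ and $S=\{\delta:n(\delta)=M\}$. If $\delta\in S$, then each summand on the left of $(\star)$ is at most $p^M$, so the whole sum is at most $Np^M$, with equality iff every $n(\delta-\zeta)=M$; therefore $S-\mu_N\subset S$, and $S$ is stable under the additive subgroup $H$ of $(\F_{p^2},+)$ generated by $\mu_N$. Since $1\in\mu_N$ we have $H\supset\F_p$, and since $N\nmid p-1$, $\mu_N$ is not contained in $\F_p$, so $H\supsetneq\F_p$. As $H$ is an $\F_p$-subspace of the $2$-dimensional space $\F_{p^2}$, this forces $H=\F_{p^2}$, whence $S=\F_{p^2}$ and $n$ is constant.

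For (i), $N\mid p-1$ gives $N\le p-1$, which upgrades $(\star)$ to a pointwise equality: if $\sum_{i=1}^{N}p^{a_i}=Np^c$ with $N\le p-1$, then any $a_j\ge c+1$ would produce a single summand $\ge p^{c+1}>Np^c$, while $\max a_i\le c$ yields $\sum\le Np^c$ with equality iff all $a_i=c$. Applied to $(\star)$, this forces $n(\delta-\zeta)=n(\delta)$ for every $\delta\in\F_{p^2}$ and $\zeta\in\mu_N$; since $\mu_N\subset\F_p$ contains $1$, the function $n$ is constant on each of the $p$ cosets of $\F_p$ in $\F_{p^2}$. For such a coset $C$ with common value $n_C$, set $A_C=\prod_{\gamma\in C}(x-\gamma)^{Np^{n_C}-1}$; because $C-\zeta=C$ for every $\zeta\in\F_p$, the formula for $\sigma$ collapses to
\[
\sigma(A_C)=\Bigl(\prod_{\gamma\in C}(x-\gamma)\Bigr)^{(p^{n_C}-1)+(N-1)p^{n_C}}=A_C.
\]
Hence each $A_C$ is perfect, and $A=\prod_C A_C$ is a product of $p\ge 3$ pairwise coprime perfect polynomials, so $A$ is trivially perfect.

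The main obstacle is the maximum-set argument in (ii): translating $(\star)$ into translation invariance of $S$ under $\mu_N$, and then invoking the $\F_p$-linear structure of $\F_{p^2}$ to jump from $\F_p$ to all of $\F_{p^2}$. Once $(\star)$ is set up and the additive span of $\mu_N$ is seen to be either $\F_p$ or $\F_{p^2}$ according as $N\mid p-1$ or not, both parts flow uniformly from the same mechanism.
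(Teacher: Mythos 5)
The paper does not actually prove this lemma---it is imported verbatim from \cite{Gall-Rahav6}, Theorem 1.1---so there is no in-paper argument to compare against; what you have written is an independent, self-contained proof, and it is correct. Your identity $(\star)$ is derived properly: since $T^{Np^n}-1=(T^N-1)^{p^n}$ and $\mu_N\subset\F_{p^2}^*$, the exponent of $x-\delta$ in $\sigma(A)$ is $\sum_{\zeta\in\mu_N}p^{n(\delta-\zeta)}-1$, which matches $Np^{n(\delta)}-1$. In (ii) the maximum-set argument works: $S-\zeta\subseteq S$ for each $\zeta\in\mu_N$ forces $S-\zeta=S$ by finiteness of $S$, so $S$ is a union of cosets of the additive span $H$ of $\mu_N$; any additive subgroup of $\F_{p^2}$ is an $\F_p$-subspace, $H$ contains $1$, and it contains an element outside $\F_p$ precisely because $N\nmid p-1$ (as $\mu_N\cap\F_p^*=\mu_{\gcd(N,p-1)}$), so $H=\F_{p^2}$ and $n$ is constant. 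In (i) the ``digit'' inequality ($N\le p-1$ forces every exponent on the left of $(\star)$ to equal $n(\delta)$) and the check that each coset block $A_C$ satisfies $\sigma(A_C)=A_C$ are both right, and $p\ge 3$ coprime perfect blocks give triviality. One caveat concerning the statement rather than your argument: part (i) requires reading the hypothesis as saying that \emph{every} $x-\gamma$, $\gamma\in\F_{p^2}$, genuinely divides $A$, i.e.\ $Np^{n(\gamma)}-1\ge 1$ for all $\gamma$; otherwise $A=(x^p-x)^{p^n-1}$ (the case $N=1$ with $n(\gamma)=0$ off $\F_p$) is perfect over $\F_{p^2}$ yet indecomposable. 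Your proof, like the cited theorem, implicitly assumes all $p^2$ linear factors occur, which is the intended reading.
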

\section{Case $\F_4$} \label{casF4}
In this section, we work over the finite field  $\F_4$ of $4$ elements:
$$\text{$\F_4 = \{0,1,\alpha,\alpha+1\}$ where $\alpha^2+\alpha+1 = 0$.}$$
We fix $A = x^a(x+1)^b(x+\alpha)^c(x+\alpha+1)^d \in \F_4[x]$, where $a,b,c,d \in \N$.\\
We set
$$\begin{array}{l}
{\mathcal{T}}:=\{r \in \N^*: \text{$r = 2$ or $r =2^n-1$, for some $n\in \N^*$}\}\\
\Sigma:= \{(x^2+x)^r, (x^2+x+1)^r: r \in {\mathcal{T}}\}.
\end{array}$$
If $a,b,c,d$ are all odd, then $\sigma^{**}(A) = \sigma(A)$. So, $A$ is b.u.p. if and only if it is perfect. We apply Lemma \ref{perfectresultF4}.
If $\omega(A) = 2$ or ($\omega(A) \geq 3$ and $A$ trivially b.u.p.), then we apply  Proposition \ref{casebuptrivial}. For the remaining case, we apply Theorem \ref{casebup}. Finally, by Lemma \ref{translation}, we get all splitting b.u.p. over $\F_4$.

\begin{proposition} \label{casebuptrivial}
Let $A \in \F_4[x]$ be b.u.p.. Then\\
i) $A  \in \Sigma$ if $\omega(A) = 2$.\\
ii) $A=A_1 A_2$ where $A_1, A_2 \in \Sigma$ if $A$ is trivially b.u.p. and $\omega(A) \geq 3$.
\end{proposition}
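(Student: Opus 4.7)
The plan is to combine multiplicativity of $\sigma^{**}$ with the fact from Lemma \ref{aboutsigmastar2}(ii) that $T\nmid \sigma^{**}(T^c)$, in order to transform the equation $\sigma^{**}(A)=A$ into factor-by-factor constraints on each irreducible power of $A$.

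For part (i), write $A = T_1^a T_2^b$ with two distinct monic linear $T_i$; by Lemma \ref{translation} I may assume $T_1 = x$, so $T_2 \in \{x+1,\, x+\alpha,\, x+\alpha+1\}$. The identity $\sigma^{**}(x^a)\,\sigma^{**}(T_2^b) = x^a T_2^b$, together with the fact that its two $\sigma^{**}$-factors are coprime respectively to $x$ and to $T_2$, forces $\sigma^{**}(x^a) = T_2^b$ and $\sigma^{**}(T_2^b) = x^a$; matching degrees then gives $a = b$. It therefore suffices to decide when $\sigma^{**}(x^a)$ is a pure power of a monic linear polynomial. If $a$ is odd, $\sigma^{**}(x^a) = 1 + x + \cdots + x^a$; writing this as $(x+c)^a$ and equating coefficients forces $\binom{a}{k}\equiv 1\pmod 2$ for every $k$ (hence $a = 2^n-1$ by Lucas' theorem) together with $c = 1$. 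If $a$ is even, Lemma \ref{splitcritere-F4}(ii) restricts $a$ to $\{2,4,6\}$, and the formulas in (\ref{relation1-F4}) show that only $a = 2$ yields a pure power, again of $x+1$. Thus $a = b \in \mathcal{T}$ and $T_2 = x+1$, so the translated $A$ equals $(x^2+x)^a$. Since $\Sigma$ is closed under $\F_4$-translations (translation by $\alpha$ or $\alpha+1$ interchanges $(x^2+x)^a$ and $(x^2+x+1)^a$, while translations by $0$ or $1$ fix each), the original $A$ lies in $\Sigma$.

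For part (ii), write $A = A_0 A_1 \cdots A_r$ with $r \geq 1$, each $A_i$ b.u.p., pairwise coprime, and nonconstant. No pure power $T^c$ with $c \geq 1$ can be b.u.p., since $\sigma^{**}(T^c) = T^c$ would give $T \mid \sigma^{**}(T^c)$, contradicting Lemma \ref{aboutsigmastar2}(ii); hence $\omega(A_i) \geq 2$ for every $i$. Because the supports of the $A_i$ are disjoint subsets of $\{x,\, x+1,\, x+\alpha,\, x+\alpha+1\}$, we obtain $\sum_i \omega(A_i) = \omega(A) \leq 4$, which leaves only $r = 1$ and $\omega(A_0) = \omega(A_1) = 2$ (so in fact $\omega(A) = 4$). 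Applying part (i) to each factor yields $A_0,\, A_1 \in \Sigma$.

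The step I expect to be the main obstacle is the coefficient-matching in the odd case of (i): namely, showing that $1 + x + \cdots + x^a = (x+c)^a$ in $\F_4[x]$ forces simultaneously $a = 2^n-1$ and $c = 1$, via a careful application of Lucas' theorem modulo $2$ to the binomial coefficients. After that, everything reduces to an orderly combination of multiplicativity, Lemma \ref{aboutsigmastar2}, the splitting criterion of Lemma \ref{splitcritere-F4}, and a small pigeonhole argument on the four-element support.
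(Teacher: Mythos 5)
Your proof is correct and follows essentially the same route as the paper: reduce via translation and Lemma \ref{aboutsigmastar2}(ii) to $\sigma^{**}(x^a)=T_2^b$, handle the even case through Lemma \ref{splitcritere-F4}(ii) and the formulas in (\ref{relation1-F4}), and settle part (ii) by the pigeonhole count $\sum_i\omega(A_i)\le 4$ with $\omega(A_i)\ge 2$. The only local divergence is in the odd case, where you equate coefficients of $(x+c)^a$ and invoke Lucas' theorem, whereas the paper simply cites its splitting criterion (Lemma \ref{splitcritere-F4}(i)) to get $a=2^n-1$; both work, and you additionally make explicit two points the paper leaves implicit (translation-closure of $\Sigma$ and why no b.u.p.\ factor can be a prime power).
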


\begin{theorem} \label{casebup}
Let $A = x^a(x+1)^b(x+\alpha)^c(x+\alpha+1)^d \in \F_4[x]$, where $a,b,c,d \in \N$ are not all odd. Then
$A$ is i.b.u.p. if and only if $a, b,c,d$ are given in Table $(\ref{expovalues})$.
\end{theorem}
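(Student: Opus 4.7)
The plan is to convert the equation $\sigma^{**}(A)=A$ into an explicit system of four equations in $a,b,c,d$ using the formulas in~(\ref{relation1-F4}). Since $\sigma^{**}$ is multiplicative and, by Lemma~\ref{aboutsigmastar2}(ii), the irreducible $T$ does not divide $\sigma^{**}(T^e)$, matching the exponent of each of the four linear factors on both sides produces four equations in which $a,b,c,d$ appear on the left, and the right-hand side at each position is the sum of three contributions, one drawn from $\sigma^{**}$ applied to each of the other three factors. Lemma~\ref{splitcritere-F4} further restricts the exponents: any even exponent must lie in $\{0,2,4,6\}$ and any odd exponent must be of the form $2^n-1$ or $3\cdot 2^n-1$ with $n\geq 1$. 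It is convenient to tabulate the contribution vector $\phi(e)=(r_1,r_2,r_3)$ defined by $\sigma^{**}(T^e)=(T+1)^{r_1}(T+\alpha)^{r_2}(T+\alpha+1)^{r_3}$: from (\ref{relation1-F4}) one reads off $\phi(2^n-1)=(2^n-1,0,0)$, $\phi(3\cdot 2^n-1)=(2^n-1,2^n,2^n)$, $\phi(2)=(2,0,0)$, $\phi(4)=(2,1,1)$, $\phi(6)=(4,1,1)$.

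Since the hypothesis excludes the all-odd case (which falls under Lemma~\ref{perfectresultF4}), at least one of $a,b,c,d$ is even. By Lemma~\ref{translation}, the Klein four-group of translations $x\mapsto x+t$, $t\in\F_4$, acts on the solution set by permuting the quadruple $(a,b,c,d)$ according to the induced action on $\{x,x+1,x+\alpha,x+\alpha+1\}$, and it suffices to classify solutions up to this action. I would case-split on the number $k\in\{1,2,3,4\}$ of even (or zero) exponents, and inside each case on which specific values in $\{0,2,4,6\}$ are assigned. A parity analysis of the $\phi$-table above immediately narrows the search: odd $e$ contributes $(\text{odd},\text{even},\text{even})$, $e=2$ contributes all-even, and $e\in\{4,6\}$ contributes $(\text{even},\text{odd},\text{odd})$; comparing parities of the four equations eliminates many combinations, after which the odd exponents are pinned down by matching powers of $2$, producing the integer parameters $n$ of Table~(\ref{expovalues}).

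One still has to check, for each surviving candidate, that (i) substituting back into the four equations indeed gives a solution, and (ii) the candidate is not trivially b.u.p., which by Lemma~\ref{multiplicativity} would mean that a proper subset of $\{x,x+1,x+\alpha,x+\alpha+1\}$ supports a coprime b.u.p.\ factor of $A$. Step (ii) is ruled out by inspecting $\phi$: for instance, if only $a>0$ is odd and $b,c,d=0$, the first equation forces $a=0$, so no three-factor or four-factor b.u.p.\ can be split off a nontrivial coprime b.u.p.\ piece without contradicting the equations.

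The main obstacle is the combinatorial case analysis itself: with four positions ranging independently over several discrete families (two infinite odd families parameterized by $n$, plus $\{0,2,4,6\}$), the number of sub-cases is substantial, and one must keep track of the translation symmetry to avoid both missing families and double-counting representatives. The hardest sub-cases are those of mixed parity (two even, two odd), where parity alone does not decide everything and one must solve the resulting equations in the exponents $n$ arising from the odd entries; here careful matching of $2$-adic valuations on both sides of each equation is what forces the sporadic list recorded in Table~(\ref{expovalues}).
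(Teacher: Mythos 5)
Your setup is the right one, and it is essentially the paper's: both arguments reduce $\sigma^{**}(A)=A$ to four exponent-matching equations via the explicit factorizations in~(\ref{relation1-F4}), both use the translation action of Lemma~\ref{translation} to cut down the cases, and both bound the parameter $n$ in an odd exponent $N\cdot 2^n-1$ by comparing it against an even exponent on the other side of an equation (this is exactly how Lemma~\ref{reduction1} and Proposition~\ref{allcriteria} obtain $a\le 6$, $b\le 11$, $c,d\le 23$). Your contribution table $\phi(e)$ and the parity observations drawn from it are correct, and the parity argument does, for instance, kill the cases with exactly one or exactly three even exponents, consistent with Table~(\ref{expovalues}).

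The genuine gap is that the decisive step is announced but never performed. The content of Theorem~\ref{casebup} is the specific list of twelve quadruples, and your text stops at ``I would case-split \dots the number of sub-cases is substantial \dots careful matching of $2$-adic valuations \dots is what forces the sporadic list.'' Nothing in the proposal actually derives a single column of the table, nor verifies that any of the twelve candidates satisfies $\sigma^{**}(A)=A$ and is indecomposable; both directions of the equivalence are deferred. The paper is honest about the same difficulty and resolves it differently: after proving the explicit bounds of Proposition~\ref{allcriteria}, it exhausts the (now finite) search space by a {\tt Maple} computation, and checks sufficiency the same way. If you intend a hand proof, you must actually write out the mixed-parity cases (two even, two odd) and solve the resulting equations in the $2$-power parameters; if you intend to finish by machine, you must first prove explicit numerical bounds on all four exponents, which your parity discussion alone does not yet supply (parity eliminates case shapes, but in the surviving shapes you still need the inequality $2^n-1\le a\le 6$ type of argument to make the odd families finite). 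A smaller point: your remark on ruling out trivial decomposability (``if only $a>0$ is odd and $b,c,d=0$\dots'') is garbled; the clean statement, used in the paper via Lemmas~\ref{multiplicativity} and~\ref{bup-omega2}, is that indecomposability excludes any pair of exponents equal to $(2,2)$ or $(2^n-1,2^n-1)$, and this must be imposed as a constraint in the search, not checked afterwards factor by factor.
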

\begin{equation} \label{expovalues}
\begin{array}{|l|c|c|c|c|c|c|c|c|c|c|c|c|c|}
\hline
a&4&4&6&6&4&4&4&4&4&4&6&6\\
\hline
b&4&4&6&6&3&5&3&5&4&4&6&6\\
\hline
c&3&5&3&5&4&4&3&5&4&6&4&6\\
\hline
d&5&3&5&3&3&5&4&4&4&6&4&6\\
\hline
\end{array}
\end{equation}

\subsection{Proof of Proposition \ref{casebuptrivial}}
\begin{lemma} \label{bup-omega2}
If $\omega(A) = 2$, then $A$ is b.u.p. over $\F_4$ if and only if $A \in \Sigma$.
\end{lemma}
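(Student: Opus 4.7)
The approach is to normalise $A$ via Lemma~\ref{translation}, reducing to the case $x \mid A$, and then apply Lemmas~\ref{aboutsigmastar2} and~\ref{splitcritere-F4} to pin down the possible exponents.

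After translation write $A = x^a T^b$ with $a,b \geq 1$ and $T \in \{x+1,\,x+\alpha,\,x+\alpha+1\}$. The identity $\sigma^{**}(A)=A$ becomes
\[
\sigma^{**}(x^a)\,\sigma^{**}(T^b) = x^a T^b,
\]
and since by Lemma~\ref{aboutsigmastar2}(ii) we have $x \nmid \sigma^{**}(x^a)$ and $T \nmid \sigma^{**}(T^b)$, the factor $\sigma^{**}(x^a)$ must be a pure power of $T$ and $\sigma^{**}(T^b)$ a pure power of $x$. Inspecting the explicit factorisations in~(\ref{relation1-F4}) shows that $(x+1)$ appears in $\sigma^{**}(x^a)$ for every admissible $a$ of Lemma~\ref{splitcritere-F4}; hence $T = x+1$. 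The pure-power requirement then rules out $a \in \{4,6\}$ and $a = 3\cdot 2^n - 1$ with $n \geq 1$ (each of which brings in $(x+\alpha)$ and $(x+\alpha+1)$) and leaves $a \in \{2\} \cup \{2^n - 1 : n\geq 1\} = \mathcal{T}$.

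Running the symmetric argument (after translating by $t=1$) on $(x+1)^b$ yields $b \in \mathcal{T}$ and $\sigma^{**}((x+1)^b) = x^b$. Matching exponents in $\sigma^{**}(A) = (x+1)^a x^b = x^a(x+1)^b$ forces $a = b =: r$, so $A = x^r(x+1)^r = (x^2+x)^r$ with $r \in \mathcal{T}$; the converse b.u.p.\ property is immediate from~(\ref{relation1-F4}). Finally, translating by $t=\alpha$ sends this family to $(x+\alpha)^r(x+\alpha+1)^r = (x^2+x+1)^r$, exhausting~$\Sigma$. The most delicate step is confirming by direct inspection of~(\ref{relation1-F4}) that the excluded values of $a$ really do introduce at least two distinct non-$x$ linear factors, so that only $a=2$ and $a=2^n-1$ survive; this is routine combinatorial bookkeeping rather than a conceptual obstacle.
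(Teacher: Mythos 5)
Your argument is correct and follows essentially the same route as the paper: both proofs use multiplicativity together with Lemma~\ref{aboutsigmastar2}(ii) to force $\sigma^{**}(x^a)$ to be a pure power of the other linear factor, then read off from Lemma~\ref{splitcritere-F4} and (\ref{relation1-F4}) that only $a\in\mathcal{T}$ survives (the values $4$, $6$ and $3\cdot 2^n-1$ being excluded because they introduce $x+\alpha$ and $x+\alpha+1$), forcing $T=x+1$ and $a=b$. The only cosmetic difference is that the paper identifies $\delta$ and $b$ directly from the equality $\sigma^{**}(x^a)=(x+\delta)^b$ rather than constraining $a$ and $b$ separately and then matching exponents.
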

\begin{proof} Sufficiency is obtained by direct computations. For the necessity, without loss of generality, we may assume that $A = x^a(x+\delta)^b$. One has $\sigma^{**}(x^{a}) \cdot \sigma^{**}((x+\delta)^{b})=x^a(x+\delta)^b$. If $a$ is odd, then $\sigma^{**}(x^{a}) = \sigma(x^{a})$ must split over $\F_4$. So, $a = 2^n-1$, for some $n\geq 1$. Thus,  $\sigma^{**}(x^{a}) = (x+1)^{2^n-1} = (x+\delta)^b$. So, $b=2^n-1$, $\beta = 1$ and $A = (x^2+x)^{2^n-1} \in \Sigma$. If $a$ is even, then $a=2k$, 
$k \in \{1,2,3\}$ because $\sigma^{**}(x^{2k})$ must split over $\F_4$. If $k>1$, then $\omega(A) \geq \omega(\sigma^{**}(x^{2k})) =3$, which is impossible. So, $k=1$ and $\sigma^{**}(x^{2k}) = \sigma^{**}(x^{2}) = (x+1)^2$. Thus, $\delta = 1$, $b=2$ and $A = x^2(x+1)^2 \in \Sigma$.
\end{proof}
\begin{lemma} \label{trivial-bup}
If $A = x^a(x+1)^b(x+\alpha)^c(x+\alpha+1)^d$ is trivially b.u.p., where $\omega(A) \geq 3$, then
$\omega(A) = 4$, $a,b,c,d \in {\mathcal{T}}$, $a=b$ and $c=d$.
\end{lemma}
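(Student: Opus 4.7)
The plan is to combine the decomposition guaranteed by the definition of trivially b.u.p.\ with the fact that $A$ can involve at most the four linear irreducibles $x, x{+}1, x{+}\alpha, x{+}\alpha{+}1$ of $\F_4[x]$.

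First, I will write $A = A_0 \cdots A_r$ with $r \geq 1$ and each $A_i$ a pairwise coprime nonconstant b.u.p.\ polynomial. The crucial preliminary claim is that $\omega(A_i) \geq 2$ for every $i$. Indeed, if $A_i = T^n$ were a prime power with $n \geq 1$, then $\sigma^{**}(T^n) = T^n$ would force $T \mid \sigma^{**}(T^n)$, contradicting Lemma \ref{aboutsigmastar2}(ii). Combined with $\omega(A) = \sum_i \omega(A_i) \leq 4$ and $r+1 \geq 2$, this forces $r = 1$, $\omega(A) = 4$, and $\omega(A_0) = \omega(A_1) = 2$.

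Next, Lemma \ref{bup-omega2} applies to each factor, giving $A_0, A_1 \in \Sigma$. So each is of the form $(x^2+x)^{r_i}$ or $(x^2+x+1)^{r_i}$ with $r_i \in \mathcal{T}$. The coprimality of $A_0$ and $A_1$ rules out both being of the same type, because the two types in $\Sigma$ have disjoint irreducible supports $\{x,x+1\}$ and $\{x+\alpha,x+\alpha+1\}$, while two polynomials of the same type share their entire support. After relabeling, $A_0 = (x^2+x)^{r_0}$ and $A_1 = (x^2+x+1)^{r_1}$. Matching this product with $A = x^a(x+1)^b(x+\alpha)^c(x+\alpha+1)^d$ then yields $a = b = r_0 \in \mathcal{T}$ and $c = d = r_1 \in \mathcal{T}$.

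The only substantive step is the preliminary claim that no b.u.p.\ factor in a trivial decomposition can be a single prime power, which relies on Lemma \ref{aboutsigmastar2}(ii); once this is secured, the rest reduces to a simple pigeonhole over the four linear factors of $\F_4[x]$ together with a direct appeal to Lemma \ref{bup-omega2}, so I do not expect any further obstacles.
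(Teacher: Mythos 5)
Your proof is correct and follows essentially the same route as the paper: decompose $A$ into pairwise coprime b.u.p.\ factors, argue each factor has at least two irreducible divisors, and invoke Lemma \ref{bup-omega2} to place both factors in $\Sigma$. The only difference is that you explicitly justify the step $\omega(A_i)\geq 2$ via Lemma \ref{aboutsigmastar2}(ii) and spell out the coprimality argument forcing one factor of each type, both of which the paper leaves implicit.
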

\begin{proof}
We may write $A = A_1 A_2$ where $A_1$ and $A_2$ are b.u.p., $\omega(A_1), \omega(A_2) \geq 2$. So,
$\omega(A_1) =\omega(A_2) = 2$ and $A_1, A_2 \in \Sigma$ by Lemma \ref{bup-omega2}.
\end{proof}
\subsection{Proof of Theorem \ref{casebup}} \label{bupsection}
Sufficiency is obtained by direct computations. For the necessity,
put $A_1= x^a(x+1)^b$ and $A_2=(x+\alpha)^c(x+\alpha+1)^d$ where at least, one of these exponents is even. We also assume that $A$ is i.b.u.p.. So,
$$(a,b), (a,c), (a,d), (b,c), (b,d), (c,d) \not\in \{(2,2), (2^n-1,2^n-1): n \in \N^*\}.$$
We give upper bounds of $a,b,c,d$ without considering too many cases. We  finish by {\tt{Maple}} Computations.

\begin{lemma} \label{reduction1}
i) If $a$ is even and $b$ odd, then $a \in \{2,4,6\}$ and $b \leq 11$.\\
ii)  If $a$ is odd and $b$ even, then $a \leq 11$ and $b \in \{2,4,6\}$.\\
iii) If $c$ is even and $d$ odd, then $c \in \{2,4,6\}$ and $d \leq 11$.\\
iv) If $c$ is odd and $d$ even, then  $c \leq 11$ and $d \in \{2,4,6\}$.
\end{lemma}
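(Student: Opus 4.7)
The plan is to prove (i) in detail and then deduce (ii)--(iv) from it by the translation trick of Lemma~\ref{translation}: substituting $x \mapsto x+1$ (resp.\ $x \mapsto x+\alpha$, $x \mapsto x+\alpha+1$) sends a b.u.p.\ polynomial with exponent tuple $(a,b,c,d)$ at $(x,x+1,x+\alpha,x+\alpha+1)$ to a b.u.p.\ polynomial with tuple $(b,a,d,c)$ (resp.\ $(c,d,a,b)$, $(d,c,b,a)$), and the property of being i.b.u.p.\ is preserved by this substitution. Each of these permutations matches the hypothesis of one of (ii), (iii), (iv) with that of (i) and permutes the conclusion accordingly.

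For (i) the inclusion $a \in \{2,4,6\}$ is immediate from Lemma~\ref{splitcritere-F4}(ii), since $\sigma^{**}(x^a)$ must split over $\F_4$. The real task is to bound the odd exponent $b$. Lemma~\ref{splitcritere-F4}(i) forces $b = N \cdot 2^n - 1$ for some $N \in \{1,3\}$ and $n \geq 1$, so it suffices to bound $n$.

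The key step is an exponent comparison on the identity $\sigma^{**}(A) = A$, namely
\[
\sigma^{**}(x^a)\,\sigma^{**}((x+1)^b)\,\sigma^{**}((x+\alpha)^c)\,\sigma^{**}((x+\alpha+1)^d) = x^a(x+1)^b(x+\alpha)^c(x+\alpha+1)^d.
\]
I would read off the exponent of the irreducible factor $x$ on both sides. By Lemma~\ref{aboutsigmastar2}(ii), $x \nmid \sigma^{**}(x^a)$, so the whole exponent $a$ on the right must be supplied by the three other $\sigma^{**}$-factors. Plugging $T = x+1$ into the two relevant lines of (\ref{relation1-F4}) (where $T+1 = x$) shows that $\sigma^{**}((x+1)^b)$ contributes exactly $x^{2^n - 1}$ in both the $N=1$ case ($\sigma^{**}((x+1)^{2^n-1}) = x^{2^n - 1}$) and the $N=3$ case ($\sigma^{**}((x+1)^{3\cdot 2^n-1}) = x^{2^n - 1}(x+\alpha+1)^{2^n}(x+\alpha)^{2^n}$). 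Since the $(x+\alpha)$- and $(x+\alpha+1)$-contributions are non-negative, we obtain $2^n - 1 \leq a \leq 6$, hence $n \leq 2$ and therefore $b \leq 3 \cdot 2^2 - 1 = 11$.

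I do not see a real obstacle: the argument is essentially a one-line exponent comparison. The only care needed is in correctly reading off the $x$-exponent from (\ref{relation1-F4}) after the substitution $T = x+1$, and in checking that the translation permutes the exponent tuple as claimed. The bound $b \leq 11$ is tight, realized by $N=3$, $n=2$.
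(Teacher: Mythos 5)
Your proof is correct and follows the paper's argument exactly: part (i) via the exponent-of-$x$ comparison giving $2^n-1\leq a\leq 6$, hence $n\leq 2$ and $b\leq 11$, with (ii)--(iv) reduced to (i) by the translation of Lemma~\ref{translation}. You simply spell out the details (the contribution $x^{2^n-1}$ from $\sigma^{**}((x+1)^b)$ in both cases $N=1,3$, and the exact permutations of the exponent tuple) that the paper leaves implicit.
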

\begin{proof}
It suffices to prove i). If $a$ is even and $b$ odd, then $a \leq 6$. Put $b =N\cdot 2^n-1$, with $N \in \{1,3\}$.
By considering the exponents of $x$ in $A$ and in $\sigma^{**}(A)$,
one has $2^n-1 \leq a \leq 6$. So, $n \leq 2$ and $b \leq 3\cdot 2^2-1 = 11$.
\end{proof}
\begin{lemma} \label{reduction2}
i) If $a, b$ are odd and $c$ even, then $a \leq 11, \ b \leq 23$, $c \in \{2,4,6\}$ and $d \leq 11$.\\
ii) If $c, d$ are odd and $a$ even, then $a \in \{2,4,6\}$,  $c \leq 11, \ d \leq 23$ and $b \leq 11$.
\end{lemma}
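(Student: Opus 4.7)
I propose to prove part (i); part (ii) is obtained by the automorphism $x \mapsto x + \alpha$ of $\F_4$, which by Lemma \ref{translation} preserves b.u.p.\ polynomials and swaps $(a,b,c,d)$ with $(c,d,a,b)$. The bound $c \in \{2,4,6\}$ is immediate from Lemma \ref{splitcritere-F4}(ii), since $\sigma^{**}((x+\alpha)^c)$ must split over $\F_4$.

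For the remaining bounds I write $a = N_1 \cdot 2^{n_1} - 1$ and $b = N_2 \cdot 2^{n_2} - 1$ with $N_1, N_2 \in \{1, 3\}$, and, if $d$ is odd, $d = N_4 \cdot 2^{n_4} - 1$. The identity $\sigma^{**}(A) = A$ yields, for each $\delta \in \F_4$, the equation $v_{x+\delta}(\sigma^{**}(A)) = v_{x+\delta}(A)$; the contribution of each factor is read directly from the formulas in (\ref{relation1-F4}), noting that by Lemma \ref{aboutsigmastar2}(ii) the factor $\sigma^{**}((x+\delta)^{\bullet})$ contributes $0$ to $v_{x+\delta}$. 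The central relation is the $v_{x+\alpha}$ equation, which after unwinding becomes
\[
c \;=\; \epsilon_1 + \epsilon_2 + v_{x+\alpha}\bigl(\sigma^{**}((x+\alpha+1)^d)\bigr),
\]
where $\epsilon_i = 2^{n_i}$ if $N_i = 3$ and $\epsilon_i = 0$ if $N_i = 1$. Since $c \leq 6$, each summand on the right is bounded by $6$: thus $N_i = 3$ forces $n_i \leq 2$, giving $a \leq 11$ (resp.\ $b \leq 11$) whenever $N_1 = 3$ (resp.\ $N_2 = 3$). When $d$ is odd the last summand equals $2^{n_4} - 1$, forcing $n_4 \leq 2$ and $d \leq 11$; when $d$ is even one has $d \leq 6$ already. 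The subcases in which exactly one of $N_1, N_2$ equals $1$ are handled by the $v_x$ and $v_{x+1}$ equations, which express $a$ in terms of $2^{n_2}$ (resp.\ $b$ in terms of $2^{n_1}$) up to a correction involving $c$, $d$ and $2^{n_4}$, all of which are already controlled.

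The main obstacle is the subcase $N_1 = N_2 = 1$: here each individual valuation equation merely relates $2^{n_1}$ and $2^{n_2}$ up to small corrections and gives no absolute bound. I would close this subcase by summing the $v_x$ and $v_{x+1}$ equations: the sum forces the non-negative correction terms coming from $(x+\alpha)^c$ and $(x+\alpha+1)^d$ to vanish simultaneously, which in turn forces $c = d = 2$. But then $A = \bigl(x(x+1)\bigr)^{2^{n_1} - 1} \bigl((x+\alpha)(x+\alpha+1)\bigr)^{2}$ is trivially b.u.p.\ (the two factors lie in $\Sigma$), contradicting the i.b.u.p.\ hypothesis. The slightly loose bound $b \leq 23$ in the statement (rather than the sharper $b \leq 11$) absorbs a borderline combination of two independent doublings from the $v_{x+1}$ and $v_{x+\alpha}$ equations; no sharper bound is needed, since the lemma's purpose is only to yield the finite search rectangle $a \leq 11$, $b \leq 23$, $c \in \{2,4,6\}$, $d \leq 11$ for the subsequent {\tt{Maple}} computation.
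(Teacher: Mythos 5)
Your proposal is correct in substance and follows essentially the same route as the paper: compare the exponent of each linear factor in $A$ and in $\sigma^{**}(A)$ via the explicit formulas in (\ref{relation1-F4}), use $c\leq 6$ to bound $2^{n_i}$ when $N_i=3$ (the paper's ``$2^n\leq c\leq 6$''), and eliminate the case $N_1=N_2=1$ through the indecomposability hypothesis. One small overstatement: the vanishing of the correction terms in your summed $v_x$ and $v_{x+1}$ equations does force $c=2$, but it does not force $d=2$, since an odd $d=2^{n_4}-1$ (with $N_4=1$) also contributes $0$ to both of those valuations. This is harmless, because the same vanishing already gives $a=b=2^{n_1}-1$, so $x^a(x+1)^b=(x^2+x)^{2^{n_1}-1}\in\Sigma$ is b.u.p.\ and Lemma \ref{multiplicativity} then makes $A$ trivially b.u.p., which is the contradiction you need and is exactly how the paper disposes of this subcase.
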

\begin{proof} It suffices to prove i). First, at least $c$ or $d$ is even. Suppose that $c$ is even.
Put $a=N\cdot 2^n-1$ and $b =M\cdot 2^m-1$, with $N,M \in \{1,3\}$.
If $N=M=1$, then $x^a(x+1)^b$ is perfect and thus $n=m$ and it is b.u.p.\\
Now, if $N=3$, then $2^n \leq c \leq 6$. So, $n \leq 2$ and $a \leq 11$. Moreover, $2^m-1 \leq a\leq 11$, so $m \leq 3$ and $b \leq 23$.\\
If $d$ is even, then $d\leq 6$. If $d$ is odd, then $d \leq 11$ by Lemma \ref{reduction1}-iii).
\end{proof}
Without loss of generality, it suffices to consider the following six cases:
$$\begin{array}{l}
\text{I: only $a$ even, II: $a$ and $b$ even, III: $a$ and $c$ even, }\\
\text{IV: $a$ and $d$ even, \ V: $a$, $b$ and $c$ even, \ VI: $a, b,c,d$ are all even}.
\end{array}$$
\begin{proposition} \label{allcriteria}
i)  If only $a$ is even, then
$a \leq 6, b \leq 11$ and $c,d \leq 23$.\\
ii) If only $a$ and $b$ are even, then
$a,b \leq 6$, $(a,b) \not= (2,2)$ and $c,d \leq 11$.\\
iii) If only $a$ and $c$ are even, then
$a,c \leq 6$, $(a,c) \not= (2,2)$ and $b,d \leq 11$.\\
iv)  If only $a$ and $d$ are even, then
$a,d \leq 6$, $(a,d) \not= (2,2)$ and $b,c \leq 11$.\\
v) If only $d$ is odd, then
$a,b,c \leq 6$, $(a,b), (a,c), (b,c) \not= (2,2)$ and $d \leq 11$.\\
vi)  If $a,b,c,d$ are all even, then $a,b,c,d \leq 6$ and \\
$(a,b), (a,c), (a,d), (b,c), (b,d), (c,d) \not= (2,2)$.
\end{proposition}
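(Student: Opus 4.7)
The plan is to treat the six parity cases I--VI separately, extracting each numerical bound from Lemma \ref{splitcritere-F4}, Lemma \ref{reduction1}, or Lemma \ref{reduction2}, and deriving the pointwise exclusions $(X,Y) \neq (2,2)$ from the i.b.u.p.\ hypothesis via Lemma \ref{multiplicativity} (when the corresponding product lies in $\Sigma$) or by direct valuation-matching in the equation $\sigma^{**}(A) = A$ (otherwise).

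The even/odd split is the key. For every even exponent $e$ among $a,b,c,d$, the factor $\sigma^{**}(T^e)$ of $\sigma^{**}(A) = A$ must split over $\F_4$, so Lemma \ref{splitcritere-F4}-ii) forces $e \in \{2,4,6\}$. This immediately delivers the bound $\leq 6$ on every even exponent, in particular in Cases V and VI. For odd exponents, Lemmas \ref{reduction1} and \ref{reduction2} supply the needed bounds. A subtlety arises in Case II, where Lemma \ref{reduction2}-ii) yields only $c \leq 11$ and $d \leq 23$; applying the same lemma to the translated polynomial $A(x+1)$, which by Lemma \ref{translation} is again i.b.u.p.\ and whose exponents $(b,a,d,c)$ still sit in Case II, gives the symmetric bound $d \leq 11$.

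Concretely: in Case I, Lemma \ref{splitcritere-F4}-ii) gives $a \leq 6$, Lemma \ref{reduction1}-i) gives $b \leq 11$, and Lemma \ref{reduction2}-ii) gives $c, d \leq 23$. Case II combines the even-exponent bound on $a,b$ with the translation trick just described. Cases III and IV each reduce to two applications of Lemma \ref{reduction1} (parts i) and iii), respectively i) and iv)), covering both even/odd pairs of exponents. In Cases V and VI every exponent lies in $\{2,4,6\}$ by splittability of $\sigma^{**}(T^e)$, and in Case V Lemma \ref{reduction1}-iii) yields $d \leq 11$. The exclusion $(a,b) \neq (2,2)$, and likewise $(c,d) \neq (2,2)$, then follows from indecomposability: if $a = b = 2$ the factor $x^2(x+1)^2 = (x^2+x)^2 \in \Sigma$ is b.u.p., so by Lemma \ref{multiplicativity} its coprime complement in $A$ must be b.u.p.\ too, making $A$ trivially b.u.p., a contradiction.

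The main obstacle is justifying the ``mixed'' exclusions $(a,c), (a,d), (b,c), (b,d) \neq (2,2)$ that appear in Cases III--VI, because the corresponding degree-$4$ factors such as $x^2(x+\alpha)^2$ are not themselves b.u.p.\ (one verifies $\sigma^{**}(x^2(x+\alpha)^2) = (x+1)^2(x+\alpha+1)^2$), so Lemma \ref{multiplicativity} cannot rule them out directly. I would handle these by a direct valuation analysis of $\sigma^{**}(A) = A$: writing each odd exponent in the form $N \cdot 2^n - 1$ with $N \in \{1,3\}$ via Lemma \ref{splitcritere-F4}-i) and reading off the contributions to $v_x, v_{x+1}, v_{x+\alpha}, v_{x+\alpha+1}$ from the explicit formulas in (\ref{relation1-F4}), the assumption e.g.\ $(a,c) = (2,2)$ reduces to a small equation of the shape $2^{n_b} - 1 + \varepsilon\cdot 2^{n_d} = 2$ with $\varepsilon \in \{0,1\}$, which admits no solution compatible with the parity constraints on $b$ and $d$. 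The same valuation argument, applied to the other linear factors, disposes of the remaining mixed exclusions and completes the proof.
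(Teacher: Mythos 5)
Your proposal is correct and, for the numerical bounds, follows the same route as the paper: the paper proves case i) by exactly the valuation comparison you describe (write each odd exponent as $N\cdot 2^n-1$, read off the exponent of $x$ in $\sigma^{**}(A)=A$ from (\ref{relation1-F4}), and note that $M=R=1$ would force $a=2^n-1$ odd), declares ii)--v) ``similar,'' and justifies only the exclusions of type $(a,b)=(2,2)$ and $(c,d)=(2,2)$ by observing that $x^2(x+1)^2=(x^2+x)^2\in\Sigma$ is b.u.p.\ and invoking Lemma \ref{multiplicativity}. Your packaging differs only cosmetically (you cite Lemmas \ref{reduction1} and \ref{reduction2} rather than redoing the valuation count, and you use Lemma \ref{translation} to symmetrize the asymmetric bound $d\le 23$ in Case II, which is a clean way to get $c,d\le 11$ there). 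The genuinely valuable point in your write-up is one the paper glosses over: the mixed exclusions $(a,c),(a,d),(b,c),(b,d)\neq(2,2)$ are asserted at the start of Section \ref{bupsection} but never proved, and as you correctly compute, $\sigma^{**}(x^2(x+\alpha)^2)=(x+1)^2(x+\alpha+1)^2$, so these pairs do not yield a b.u.p.\ factor and Lemma \ref{multiplicativity} cannot dispose of them. Your valuation fix is the right repair; just be aware that the single equation $2^{n_b}-1+\varepsilon\cdot 2^{n_d}=2$ only settles the cases where the two remaining exponents are odd (e.g.\ Case III with $(a,c)=(2,2)$). In Cases V and VI the analogous count of $v_x$ reads $\{2,2,4\}+\{0,1,1\}=2$ and is \emph{not} contradictory by itself: it merely forces $d=2$, and you must then fall back on the already-established exclusion $(c,d)\neq(2,2)$ (Case VI) or compare a second valuation such as $v_{x+\alpha}$ and $v_{x+1}$ (Case V) to finish. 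Alternatively, since these exclusions only prune the {\tt Maple} search, one could drop them entirely without affecting Theorem \ref{casebup}.
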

\begin{proof} i): One has $a \in \{2,4,6\}$ and $b = N \cdot 2^n-1, c=M \cdot 2^m-1 , d =R \cdot 2^r-1 $, $2^n-1 \leq a \leq 6$, so $n\leq 2$ and $b \leq 3 \cdot 2^2-1 = 11$.\\
If $M=R=1$, then $x^{2^n-1} \| \sigma^{**}(A) = A$. So, we get the contradiction: $2^n-1 = a$ is even. Thus, we may suppose that $M=3$.
Hence, $x^{2^m}$ divides $\sigma^{**}(A) = A$. One has $2^m \leq a\leq 6$, $m \leq 2$, $M \leq 11$, $2^r-1 \leq c\leq 11$, $r \leq 3$ and $d \leq 23$.\\
The proofs of ii), iii), iv) and v) are similar. Note that if $a=b=2$ for example in ii), then $A_1$ is b.u.p. and thus $A_2$ is also b.u.p., which contradicts the fact that $A$ is indecomposable.
\end{proof}

\subsection{Maple Computations} \label{compute}
We search
all $S=x^a(x+1)^b (x+\alpha)^c(x+\alpha+1)^d$ such that $a,b,c,d$ are not all odd, $\omega(S) \geq 3$ and
$\sigma^{**}(S)= S$, by means of Proposition \ref{allcriteria}. We quickly obtain the results stated in Theorem \ref{casebup}.\\
\\
{\bf{The function $\sigma^{**}$ is defined as}} Sigm2star
\begin{verbatim}
> Sigm2star1:=proc(S,a) if a=0 then 1;else if a mod 2 = 0
then n:=a/2:sig1:=sum(S^l,l=0..n):sig2:=sum(S^l,l=0..n-1):
Factor((1+S)*sig1*sig2) mod 2:
else Factor(sum(S^l,l=0..a)) mod 2:fi:fi:end:
> Sigm2star:=proc(S) P:=1:L:=Factors(S) mod 2:k:=nops(L[2]):
for j to k do S1:=L[2][j][1]:h1:=L[2][j][2]:
P:=P*Sigm2star1(S1,h1):od:P:end:
\end{verbatim}
\section{General case} \label{casFpdeux}
We fix an odd prime number $p$ and we put $q=p^2$.
We consider the set $\Omega = \Omega_1 \cup \Omega_2 \cup \Omega_3 \cup \Omega_4$ defined in Section \ref{prelim-casgene}.
Our result is
\begin{theorem} \label{splitbup}
Let $r \in \N^*$ and $A = (x^{q}-x)^{2r} \in \F_{q}[x]$. Then $A$ is b.u.p. over $\F_{q}$ if and only if $r \in \Omega$.
\end{theorem}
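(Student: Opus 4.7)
The starting point is the identity $x^q - x = \prod_{\gamma \in \F_q}(x-\gamma)$, which lets us write
\[
A = \prod_{\gamma \in \F_q}(x-\gamma)^{2r}
\]
as a product of pairwise coprime prime powers. Multiplicativity of $\sigma^{**}$ then gives
\[
\sigma^{**}(A) = \prod_{\gamma \in \F_q}\sigma^{**}\bigl((x-\gamma)^{2r}\bigr),
\]
so the entire question reduces to analyzing $\sigma^{**}((x-\gamma)^{2r})$ uniformly in $\gamma$.

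The necessity direction is immediate: if $A$ is b.u.p.\ then $\sigma^{**}(A) = A$ splits over $\F_q$, hence so does each factor $\sigma^{**}((x-\gamma)^{2r})$; taking $\gamma = 0$ and applying Lemma \ref{splitcritere-gene}(ii) yields $r \in \Omega$.

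For sufficiency, assume $r \in \Omega$. By Lemma \ref{lesOmega}, $r$ is either an element $N$ of $\Omega_1$ or one of $p$, $p-1$, $q-1$. In each of the four cases, the relevant identity in (\ref{relation1-gene}), applied with $T = x - \gamma$, expresses
\[
\sigma^{**}\bigl((x-\gamma)^{2r}\bigr) = \prod_{\tau \in S} (x - \gamma - \tau)^{m_\tau},
\]
where $S \subseteq \F_q$ is a fixed finite (multi)set (the roots $\beta_k$ of $-1$, the roots $\zeta_k$ of $1$, and a few specific elements of $\F_p$) independent of $\gamma$, and $\sum_{\tau} m_\tau = \deg\sigma^{**}(T^{2r}) = 2r$. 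For each fixed $\tau$, the shift $\gamma \mapsto \gamma + \tau$ is a bijection of $\F_q$, so
\[
\prod_{\gamma \in \F_q}(x - \gamma - \tau)^{m_\tau} = \prod_{\gamma' \in \F_q}(x - \gamma')^{m_\tau}.
\]
Multiplying over all $\tau \in S$ and interchanging products yields $\sigma^{**}(A) = (x^q - x)^{\sum_\tau m_\tau} = (x^q - x)^{2r} = A$.

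The only thing to verify case by case is that every $\tau$ appearing actually lies in $\F_q$, which is exactly what the divisibility conditions defining $\Omega_1, \Omega_2, \Omega_3, \Omega_4$ encode (they guarantee that the relevant roots of unity already live in $\F_q$), and that the multiplicities tally to $2r$; both are immediate from Lemma \ref{lesOmega} and the explicit formulas in (\ref{relation1-gene}). No deeper obstacle is expected: the main conceptual ingredient is the translation-bijection trick combined with the degree identity $\deg\sigma^{**}(T^{2r}) = 2r$, after which the four subcases are uniform bookkeeping.
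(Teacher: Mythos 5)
Your proof is correct and follows essentially the same route as the paper: both directions reduce to the explicit split factorizations in (\ref{relation1-gene}), and your reindexing of the double product over $\gamma$ and $\tau$ is just a cleaner packaging of the paper's count (via Lemma \ref{lemma2beard} and the tuples $\Lambda$, $\Delta$, $\Gamma$) that each $x-\gamma$ occurs in $\sigma^{**}(A)$ with exponent exactly $(N-1)+(N+1)=2r$ in each of the four cases. The degree identity $\deg\sigma^{**}(T^{2r})=2r$ you invoke is immediate from $\sigma^{**}(T^{2n})=(1+T^{n+1})\sigma(T^{n-1})$, so no gap remains.
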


\subsection{Proof of Theorem \ref{splitbup}}
We get the necessity from Lemmas \ref{splitcritere-gene} and \ref{lesOmega}, because each $\sigma^{**}((x-\gamma)^{2r})$ must split over $\F_q$. We prove the sufficiency.
\subsubsection{The tuples $\Lambda_N^{\gamma}$, $\Delta_N^{\gamma}$, $\Lambda_{p}^{\gamma}$,
$\Delta_{p}^{\gamma}$, $\Gamma_{p-1}^{\gamma}$ and $\Gamma_{q-1}^{\gamma}$}

According to Notation \ref{rootsnotation}, we consider for $\gamma \in \F_q$,
the following tuples:
$$\begin{array}{l}
\Lambda_N^{\gamma}:= (\gamma - \zeta_2, \ldots,
\gamma - \zeta_N), \text{  if $N \mid (q-1)$},\\
\text{the $(N-1)$-tuples of $\delta \in \F_q$ such that $(x-\gamma)$ divides $\sigma((x-\delta)^{N-1})$, }\\
\\
\Delta_N^{\gamma}:= (\gamma - \beta_1, \ldots,
\gamma - \beta_{N+1}) \text{ if $(2N+2) \mid (q-1)$},\\
\text{the $(N+1)$-tuples of $\delta \in \F_q$ such that $(x-\gamma)$ divides $1+(x-\delta)^{N+1}$, }\\
\\
\Lambda_{p}^{\gamma}:= (\underbrace{\gamma-1, \ldots,\gamma-1}_{(p-1)-times}),\\
\text{the $(p-1)$-tuples of $\delta \in \F_q$ such that $(x-\gamma)$ divides $\sigma((x-\delta)^{p-1})$, }\\
\\
\Delta_{p}^{\gamma}:= (\underbrace{\gamma-\beta_1, \ldots,\gamma-\beta_{p+1}}_{(p+1)-times}),\\
\text{the $(p+1)$-tuples of $\delta \in \F_q$ such that $(x-\gamma)$ divides $1+(x-\delta)^{p+1}$, }\\
\\
\Gamma_{p-1}^{\gamma}:= (\underbrace{\gamma+1, \ldots,\gamma+1}_{p-times}),\ 
\Gamma_{q-1}^{\gamma}:= (\underbrace{\gamma+1, \ldots,\gamma+1}_{q-times}).
\end{array}$$
We denote by $\Pi[j]$ the $j$-th element of a tuple $\Pi$, $1 \leq j \leq {\tt{length}}(\Pi)$.

\begin{exemple}
{\emph{$\Lambda_{p-1}^{\gamma} = (\gamma-2, \gamma-3, \ldots,\gamma-(p-1))$,\\
$\text{$\Lambda_N^{\gamma}[j]=\gamma - \zeta_{j+1}$, $1 \leq j \leq N-1$,}$\
$\Delta_N^{\gamma}[j]=\gamma - \beta_j$, $1 \leq j \leq N+1$,\\
$\Lambda_p^{\gamma}[j]=\gamma - 1$, $1 \leq j \leq p-1$, $\Gamma_{q-1}^{\gamma}[j] = \gamma +1$, $1 \leq j \leq q$,\\
For $N \in \Omega_1$, $x-\gamma$ divides $\sigma^{**}((x-\delta)^{2N})$ if and only if
$\delta = \Lambda_N^{\gamma}[j]$,  for some $j$ or $\delta = \Delta_N^{\gamma}[k]$, for some $k$.}}
\end{exemple}
The following straightforward result is useful.
\begin{lemma} \label{lemma2beard}
A polynomial $Q$ is bi-unitary perfect if and only if for any
irreducible polynomial $P \in \F_q[x]$, and for any positive
integers $m_1, m_2$, we have:$$(P^{m_1} \ || \ Q, \ P^{m_2} \ || \
\sigma^{**}(Q))~\Longrightarrow~(m_1~=~m_2).$$
\end{lemma}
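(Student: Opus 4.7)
The plan is to prove the two directions of the equivalence separately.

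Necessity is immediate from the preliminaries: if $A = (x^q - x)^{2r}$ is b.u.p., then by multiplicativity of $\sigma^{**}$ applied to the pairwise coprime factors $(x - \gamma)^{2r}$, each $\sigma^{**}((x - \gamma)^{2r})$ divides $\sigma^{**}(A) = A$ and therefore splits over $\F_q$. Lemma~\ref{splitcritere-gene}-ii) then forces $r \in \Omega$.

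For sufficiency, I would invoke Lemma~\ref{lemma2beard}. Because $A$ splits, the only irreducibles dividing either $A$ or $\sigma^{**}(A)$ are the linear polynomials $(x - \gamma)$, $\gamma \in \F_q$. Each occurs in $A$ with exact exponent $2r$, so the task reduces to checking that each $(x - \gamma)$ occurs in $\sigma^{**}(A) = \prod_{\delta \in \F_q} \sigma^{**}((x - \delta)^{2r})$ with exact exponent $2r$ as well. The key is a simple double-counting argument: the factorizations in (\ref{relation1-gene}) express each $\sigma^{**}((x - \delta)^{2r})$ as a product of exactly $2r$ linear factors (with multiplicity). Fixing $\gamma$ and summing over $\delta \in \F_q$, each linear factor appearing in the generic formula coincides with $(x - \gamma)$ for exactly one $\delta$, since translation acts as a bijection on $\F_q$. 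Consequently the exponent of $(x - \gamma)$ in $\sigma^{**}(A)$ equals the total number of linear factors in a single $\sigma^{**}((x - \delta)^{2r})$, which is $2r$.

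I would verify this count case by case using the tuple formalism of Notation~\ref{rootsnotation}. For $r = N \in \Omega_1$, one has $|\Lambda_N^\gamma| + |\Delta_N^\gamma| = (N-1) + (N+1) = 2N$; for $r = p \in \Omega_2$, $|\Lambda_p^\gamma| + |\Delta_p^\gamma| = (p-1) + (p+1) = 2p$; for $r = p - 1 \in \Omega_3$, the total is $p + (p - 2) = 2(p-1)$; and for $r = q - 1 \in \Omega_4$, it is $q + (q - 2) = 2(q - 1)$. Overlaps between the roots (for instance, $-1$ being simultaneously a $\beta_k$ and a $\zeta_k$ when $N$ is even in $\Omega_1$) cause no difficulty, since we are counting with multiplicity throughout.

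The main bookkeeping obstacle will be $\Omega_3$ and $\Omega_4$, where (\ref{relation1-gene}) displays a single factor $(T+1)^{p+1}$ (respectively $(T+1)^{q+1}$) that does not immediately match $\Gamma_{p-1}^\gamma$ of length $p$ (respectively $\Gamma_{q-1}^\gamma$ of length $q$). The cleanest route is to use the alternative decomposition $\sigma^{**}(T^{2(p-1)}) = (T+1)^p \cdot \sigma(T^{p-2})$ (which exploits $(1 + T^p) = (1+T)^p$ in characteristic $p$): then $(T+1)^p$ matches $\Gamma_{p-1}^\gamma$ exactly, while the residual factor $(T+1)$ hidden inside $\sigma(T^{p-2}) = \prod_{\ell = 2}^{p-1}(T - \ell)$ is naturally absorbed into the $\Lambda_{p-1}^\gamma$ contribution of length $p - 2$. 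The analogous treatment in $\Omega_4$ uses $(1 + T^q) = (1 + T)^q$. With all four counts in place, Lemma~\ref{lemma2beard} delivers the sufficiency.
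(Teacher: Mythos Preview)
Your proposal does not address the stated lemma at all. Lemma~\ref{lemma2beard} is a general characterization: a polynomial $Q$ is bi-unitary perfect if and only if, for every irreducible $P$, the exact exponents of $P$ in $Q$ and in $\sigma^{**}(Q)$ coincide. This is essentially a restatement of the definition $\sigma^{**}(Q) = Q$ via unique factorization in $\F_q[x]$ (together with the observation that $\sigma^{**}$ preserves degree and leading coefficient). The paper accordingly labels it ``straightforward'' and supplies no proof.

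What you have actually written is a proof of Theorem~\ref{splitbup} (more precisely, of Proposition~\ref{perfectcritere}): you take $A = (x^q - x)^{2r}$, extract the necessity $r \in \Omega$ from Lemma~\ref{splitcritere-gene}, and then \emph{invoke} Lemma~\ref{lemma2beard} as a tool to establish sufficiency via exponent counting. The very fact that you appeal to Lemma~\ref{lemma2beard} in the course of your argument shows you are not proving it. Your reasoning tracks the paper's own proof of Proposition~\ref{perfectcritere} almost line for line, including the tuple formalism and the four length computations $(N-1)+(N+1)$, $(p-1)+(p+1)$, $(p-2)+p$, $(q-2)+q$. The extra care you take in reconciling the factor $(T+1)^{p+1}$ with $\Gamma_{p-1}^\gamma$ of length $p$ in the $\Omega_3$ case (and analogously in $\Omega_4$) is correct and welcome, but all of this belongs to Proposition~\ref{perfectcritere}, not to the lemma you were asked to prove.
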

We obtain an immediate consequence which finishes the proof of Theorem \ref{splitbup}.
\begin{proposition} \label{perfectcritere}
Let $m \in \N^*$ and $A = \displaystyle{\prod_{\gamma
\in \F_q} (x-\gamma)^{2m}} \in \F_{q}[x]$, where $m \in \Omega$. Then, $A$ is b.u.p. over $\F_q$.
\end{proposition}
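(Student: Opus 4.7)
My strategy is to bypass the case-by-case analysis suggested by the tuples $\Lambda_\bullet^\gamma$, $\Delta_\bullet^\gamma$, $\Gamma_\bullet^\gamma$ and instead exploit the translation symmetry of $A = \prod_{\gamma \in \F_q}(x-\gamma)^{2m}$ under the action of $\F_q$ on itself. The key observation is that $A = (x^q-x)^{2m}$, and the same telescoping structure should survive the application of $\sigma^{**}$.

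First, multiplicativity of $\sigma^{**}$ gives
\[
\sigma^{**}(A) \;=\; \prod_{\delta \in \F_q}\sigma^{**}\bigl((x-\delta)^{2m}\bigr),
\]
and by Lemma~\ref{splitcritere-gene} each factor on the right-hand side splits over $\F_q$ (this is precisely where $m \in \Omega$ is used). Write $\sigma^{**}(T^{2m}) = \prod_{j=1}^{2m}(T-c_j)$ in $\F_q[T]$; the degree is $2m$ by Lemma~\ref{aboutsigmastar2}, and the $c_j$ lie in $\F_q$ by the splitting just asserted. Substituting $T = x-\delta$ and swapping the order of the two products,
\[
\sigma^{**}(A) \;=\; \prod_{j=1}^{2m}\Bigl(\prod_{\delta\in\F_q}\bigl(x-(\delta+c_j)\bigr)\Bigr).
\]
For each fixed $j$, the map $\delta \mapsto \delta + c_j$ is a bijection of $\F_q$, so the inner product equals $\prod_{\gamma\in\F_q}(x-\gamma) = x^q - x$. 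Hence $\sigma^{**}(A) = (x^q-x)^{2m} = A$, which is the desired identity.

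I do not foresee any serious obstacle: the only nontrivial input is the splitting statement of Lemma~\ref{splitcritere-gene}, and everything else is formal manipulation. The intricate combinatorics encoded in the tuples $\Lambda_\bullet^\gamma, \Delta_\bullet^\gamma, \Gamma_\bullet^\gamma$ are sidestepped entirely because translation invariance lets each root $c_j$ orbit all of $\F_q$ uniformly; the local multiplicity count $\sum_{\delta}\mu_\delta$ demanded by Lemma~\ref{lemma2beard} automatically collapses to the degree $2m$ of $\sigma^{**}(T^{2m})$ without having to distinguish between $m\in\Omega_1,\Omega_2,\Omega_3,\Omega_4$ or to worry about coincidences of roots (for instance the $\beta_k=\zeta_j=-1$ overlap in the even-$N$ case of $\Omega_1$).
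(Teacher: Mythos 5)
Your argument is correct, and it takes a genuinely different (and more uniform) route than the paper. The paper invokes the local criterion of Lemma~\ref{lemma2beard} and then, separately for each of the four families $\Omega_1,\Omega_2,\Omega_3,\Omega_4$, uses the explicit root lists of~(\ref{relation1-gene}) (packaged as the tuples $\Lambda_\bullet^\gamma$, $\Delta_\bullet^\gamma$, $\Gamma_\bullet^\gamma$) to count the multiplicity of each $x-\gamma$ in $\sigma^{**}(A)$ as ${\tt length}(\Lambda)+{\tt length}(\Delta)=2m$. You instead prove the global identity $\sigma^{**}(A)=A$ directly: multiplicativity gives $\sigma^{**}(A)=\prod_{\delta}F(x-\delta)$ with $F(T)=\sigma^{**}(T^{2m})$ monic of degree $2m$ (Lemma~\ref{aboutsigmastar2}), the hypothesis $m\in\Omega$ enters only through Lemma~\ref{splitcritere-gene} to write $F(T)=\prod_{j}(T-c_j)$ with all $c_j\in\F_q$, and swapping the two finite products telescopes each inner factor to $x^q-x$ because $\delta\mapsto\delta+c_j$ permutes $\F_q$. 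Your multiplicity bookkeeping is automatically correct since repeated roots are carried by the factorization itself, so the coincidences you mention at the end are indeed harmless. What the paper's version buys is the explicit identification of which $\delta$ contribute a given factor $x-\gamma$ (information reused in the style of \cite{Gall-Rahav6}); what yours buys is brevity and the elimination of the four-way case split, at the cost of not displaying the roots. One cosmetic point: you should say explicitly that $F$ is monic (clear from $F(T)=(1+T^{m+1})\sigma(T^{m-1})$) to justify writing it as $\prod_j(T-c_j)$ with no leading constant.
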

\begin{proof}
For every $\gamma \in \F_q$, we may apply Lemma \ref{lemma2beard} to
the polynomial $x-\gamma$, where $m_1 = 2m \geq 1$. \\
- If $m \in \Omega_1$, then $m=N$ where $N$ and $2N+2$ both divide $q-1$.
From (\ref{relation1-gene}), we get the equality: $$\sigma^{**}(A) = \displaystyle{\prod_{\delta \in
\F_q} \sigma^{**}((x-\delta)^{2N}) = \prod_{k=1}^{N+1} (T-\beta_k) \cdot \prod_{k=2}^N (T-\zeta_k)},$$ we see that $x-\gamma$ divides $\sigma^{**}(A)$ if and only if it divides $\sigma^{**}((x-\delta)^{2m})$ for some $\delta \in \F_q$. Hence, $\gamma$ is of the form $\delta+\zeta_j$ or $\delta+\beta_k$
for some $\zeta_j$ and $\beta_k$, i.e., $\delta = \Lambda_N^{\gamma}[j]$ or $\delta =\Delta_N^{\gamma}[k]$. Hence, the exponent of
$x-\gamma$ in $\sigma^{**}(A)$ is exactly the integer $m_2 =
\displaystyle{{\tt{length}} (\Lambda_N^{\gamma}) + {\tt{length}} (\Delta_N^{\gamma}) = (N-1) + (N+1) = 2m}.$\\
- If $m \in \Omega_2$, then $m=p$. 
The monomial $x-\gamma$ divides $\sigma^{**}(A)$ if and only if there exists $\delta \in \F_q$ such that $\delta = \Lambda_p^{\gamma}[j]$ or $\delta =\Delta_{p}^{\gamma}[k]$, for some $j,k$. We get $$m_2 =
\displaystyle{{\tt{length}}(\Lambda_{p}^{\gamma}) + {\tt{length}} (\Delta_{p}^{\gamma}) = (p-1)+(p+1) = 2m}.$$
- If $m \in \Omega_3$, then $m= p-1$. 
The monomial $x-\gamma$ divides $\sigma^{**}(A)$ if and only if there exists $\delta \in \F_q$ such that $\delta = \Lambda_{p-1}^{\gamma}[j]$ or $\delta =\Gamma_{p-1}^{\gamma}[k]$, for some $j,k$. We get $$m_2 =
\displaystyle{{\tt{length}} (\Lambda_{p-1}^{\gamma}) + {\tt{length}} (\Gamma_{p-1}^{\gamma}) = (p-2)+(p) = 2m}.$$
- If $m \in \Omega_4$, then $m=q-1$. 
The monomial $x-\gamma$ divides $\sigma^{**}(A)$ if and only if there exists $\delta \in \F_q$ such that $\delta = \Lambda_{q-1}^{\gamma}[j]$ or $\delta =\Gamma_{q-1}^{\gamma}[k]$, for some $j,k$. We get $$m_2 =
\displaystyle{{\tt{length}} (\Lambda_{q-1}^{\gamma}) + {\tt{length}} (\Gamma_{q-1}^{\gamma}) = (q-2)+(q) = 2m}.$$
\end{proof}
\def\thebibliography#1{\section*{\titrebibliographie}
\addcontentsline{toc}
{section}{\titrebibliographie}\list{[\arabic{enumi}]}{\settowidth
 \labelwidth{[
#1]}\leftmargin\labelwidth \advance\leftmargin\labelsep
\usecounter{enumi}}
\def\newblock{\hskip .11em plus .33em minus -.07em} \sloppy
\sfcode`\.=1000\relax}
\let\endthebibliography=\endlist

\def\biblio{\def\titrebibliographie{References}\thebibliography}
\let\endbiblio=\endthebibliography

%%%% MACROS DE SEROUL POUR LES REFERENCES %%%%

%%%%%%% bibliographie selon AMS style %%%%%%%%%%%
%%%%%%% inspir de TUGboat 11 (1990), p. 609 %%%%%%%

\newbox\auteurbox
\newbox\titrebox
\newbox\titrelbox
\newbox\editeurbox
\newbox\anneebox
\newbox\anneelbox
\newbox\journalbox
\newbox\volumebox
\newbox\pagesbox
\newbox\diversbox
\newbox\collectionbox
%--------------------------------------------
\def\fabriquebox#1#2{\par\egroup
\setbox#1=\vbox\bgroup \leftskip=0pt \hsize=\maxdimen \noindent#2}
%--------------------------------------------
\def\bibref#1{\bibitem{#1}

%\mbox{}\ignorespaces

\setbox0=\vbox\bgroup}
%--------------------------------------------
\def\auteur{\fabriquebox\auteurbox\styleauteur}
\def\titre{\fabriquebox\titrebox\styletitre}
\def\titrelivre{\fabriquebox\titrelbox\styletitrelivre}
\def\editeur{\fabriquebox\editeurbox\styleediteur}

\def\journal{\fabriquebox\journalbox\stylejournal}

\def\volume{\fabriquebox\volumebox\stylevolume}
\def\collection{\fabriquebox\collectionbox\stylecollection}
%--------------------------------------------
{\catcode`\- =\active\gdef\annee{\fabriquebox\anneebox\catcode`\-
=\active\def -{\hbox{\rm
\string-\string-}}\styleannee\ignorespaces}}
%--------------------------------------------
{\catcode`\-
=\active\gdef\anneelivre{\fabriquebox\anneelbox\catcode`\-=
\active\def-{\hbox{\rm \string-\string-}}\styleanneelivre}}
%--------------------------------------------
{\catcode`\-=\active\gdef\pages{\fabriquebox\pagesbox\catcode`\-
=\active\def -{\hbox{\rm\string-\string-}}\stylepages}}
%--------------------------------------------
{\catcode`\-
=\active\gdef\divers{\fabriquebox\diversbox\catcode`\-=\active
\def-{\hbox{\rm\string-\string-}}\rm}}
%--------------------------------------------
\def\ajoutref#1{\setbox0=\vbox{\unvbox#1\global\setbox1=
\lastbox}\unhbox1 \unskip\unskip\unpenalty}
%--------------------------------------------
\newif\ifpreviousitem
\global\previousitemfalse
\def\separateur{\ifpreviousitem {,\ }\fi}
%--------------------------------------------
\def\voidallboxes
{\setbox0=\box\auteurbox \setbox0=\box\titrebox
\setbox0=\box\titrelbox \setbox0=\box\editeurbox
\setbox0=\box\anneebox \setbox0=\box\anneelbox
\setbox0=\box\journalbox \setbox0=\box\volumebox
\setbox0=\box\pagesbox \setbox0=\box\diversbox
\setbox0=\box\collectionbox \setbox0=\null}
%--------------------------------------------
\def\fabriquelivre
{\ifdim\ht\auteurbox>0pt
\ajoutref\auteurbox\global\previousitemtrue\fi
\ifdim\ht\titrelbox>0pt
\separateur\ajoutref\titrelbox\global\previousitemtrue\fi
\ifdim\ht\collectionbox>0pt
\separateur\ajoutref\collectionbox\global\previousitemtrue\fi
\ifdim\ht\editeurbox>0pt
\separateur\ajoutref\editeurbox\global\previousitemtrue\fi
\ifdim\ht\anneelbox>0pt \separateur \ajoutref\anneelbox
\fi\global\previousitemfalse}
%--------------------------------------------
\def\fabriquearticle
{\ifdim\ht\auteurbox>0pt        \ajoutref\auteurbox
\global\previousitemtrue\fi \ifdim\ht\titrebox>0pt
\separateur\ajoutref\titrebox\global\previousitemtrue\fi
\ifdim\ht\titrelbox>0pt \separateur{\rm in}\
\ajoutref\titrelbox\global \previousitemtrue\fi
\ifdim\ht\journalbox>0pt \separateur
\ajoutref\journalbox\global\previousitemtrue\fi
\ifdim\ht\volumebox>0pt \ \ajoutref\volumebox\fi
\ifdim\ht\anneebox>0pt  \ {\rm(}\ajoutref\anneebox \rm)\fi
\ifdim\ht\pagesbox>0pt
\separateur\ajoutref\pagesbox\fi\global\previousitemfalse}
%--------------------------------------------
\def\fabriquedivers
{\ifdim\ht\auteurbox>0pt
\ajoutref\auteurbox\global\previousitemtrue\fi
\ifdim\ht\diversbox>0pt \separateur\ajoutref\diversbox\fi}
%--------------------------------------------
\def\endbibref
{\egroup \ifdim\ht\journalbox>0pt \fabriquearticle
\else\ifdim\ht\editeurbox>0pt \fabriquelivre
\else\ifdim\ht\diversbox>0pt \fabriquedivers \fi\fi\fi
.\voidallboxes}
%--------------------------------------------

\let\styleauteur=\sc
\let\styletitre=\it
\let\styletitrelivre=\sl
\let\stylejournal=\rm
\let\stylevolume=\bf
\let\styleannee=\rm
\let\stylepages=\rm
\let\stylecollection=\rm
\let\styleediteur=\rm
\let\styleanneelivre=\rm

\begin{biblio}{99}

\begin{bibref}{BeardU}
\auteur{J. T. B. Beard Jr} \titre{Unitary perfect polynomials over
$GF(q)$} \journal{Rend. Accad. Lincei} \volume{62} \pages 417-422
\annee 1977
\end{bibref}

\begin{bibref}{Beard-bup}
\auteur{J. T. B. Beard Jr}  \titre{Bi-Unitary Perfect polynomials over $GF(q)$}
\journal{Annali di Mat. Pura ed Appl.} \volume{149(1)} \pages 61-68 \annee 1987
\end{bibref}

\begin{bibref}{Canaday}
\auteur{E. F. Canaday} \titre{The sum of the divisors of a
polynomial} \journal{Duke Math. J.} \volume{8} \pages 721-737 \annee
1941
\end{bibref}

\begin{bibref}{Gall-Rahav-F4}
\auteur{L. H. Gallardo, O. Rahavandrainy} \titre{On
perfect polynomials over $\F_4$}
\journal{Port. Math. (N.S.)} \volume{62(1)} \pages 109-122 \annee
2005
\end{bibref}

\begin{bibref}{Gall-Rahav6}
\auteur{L. H. Gallardo, O. Rahavandrainy} \titre{On splitting
perfect polynomials over $\F_{p^2}$} \journal{Port. Math. (N.S.) }
\volume{66(3)} \pages 261-273 \annee 2009
\end{bibref}

\begin{bibref}{Gall-Rahav9}
\auteur{L. H. Gallardo, O. Rahavandrainy} \titre{On unitary
splitting perfect polynomials over $\F_{p^2}$} \journal{Math.
Commun.} \volume{15(1)} \pages 159-176 \annee 2010
\end{bibref}

\begin{bibref}{Gall-Rahav2}
\auteur{L. H. Gallardo, O. Rahavandrainy} \titre{On splitting
perfect polynomials over $\F_{p^p}$} \journal{Int. Electron. J.
Algebra} \volume{9} \pages 85-102 \annee 2011
\end{bibref}

\begin{bibref}{Gall-Rahav-bup-Mersenne}
\auteur{L. H. Gallardo, O. Rahavandrainy} \titre{All bi-unitary perfect polynomials over $\F_2$ only divisible by $x$, $x+1$ and by Mersenne primes} \journal{arXiv Math: 2204.13337} \annee 2022
\end{bibref}

\end{biblio}

\end{document}